\DeclareMathOperator{\soc}{soc}
\DeclareMathOperator{\aut}{Aut}
\DeclareMathOperator{\GL}{GL} 
\DeclareMathOperator{\GaL}{\Gamma L} 
\DeclareMathOperator{\agaL}{A\Gamma L}
\DeclareMathOperator{\PSL}{PSL}
\DeclareMathOperator{\GF}{GF}
\DeclareMathOperator{\gal}{Gal}
\DeclareMathOperator{\sym}{Sym}
\newcommand{\BB}{\mathcal{B}}
\newtheorem{thm}{Theorem}[section]
\newtheorem{cor}[thm]{Corollary}
\newtheorem{lemma}[thm]{Lemma}
\newtheorem{prop}[thm]{Proposition}
\theoremstyle{definition} 
\theoremstyle{definition}
\numberwithin{equation}{section}
\renewcommand{\footnote}{\endnote}
\newcommand{\ignore}[1]{}\makeglossary
\begin{document}
	\title[]{IBIS soluble linear groups} 

\maketitle

\begin {center}
\bf{Andrea Lucchini$^1$ and Dmitry Malinin$^2$} \footnotetext [1] { Dipartimento di Matematica 
\lq\lq Tullio Levi-Civita", Universit\`a degli Studi di Padova, Via Trieste 63, 35121, Padova, Italy; email address: lucchini@math.unipd.it }
\footnotetext [2] { Dipartimento di Matematica 
\lq\lq Tullio Levi-Civita", Universit\`a degli Studi di Padova, Via Trieste 63, 35121, Padova, Italy; email address: dmalinin@gmail.com }
\end {center}
\bigskip


\begin{abstract}
	Let $G$ be a finite permutation group on $\Omega.$
	An ordered sequence $(\omega_1,\dots, \omega_t)$ of elements of $\Omega$
	is an irredundant
	base for $G$ if the pointwise stabilizer is trivial and no point is fixed by the stabilizer of its predecessors. If all irredundant
	bases of $G$ have the same cardinality, $G$ is said to be an IBIS group.
In this paper we give a classification  
of quasi-primitive soluble irreducible IBIS linear groups, and we also describe nilpotent and metacyclic IBIS linear groups and IBIS linear groups of odd order. 
\end{abstract}

\section{Introduction}
 Let $G\le \sym(\Omega)$ be a finite permutation group.  A subset $\BB$ of $\Omega$ is a \emph{base} for
$G$ if the pointwise stabilizer $G_{(\BB)}$ is trivial and we denote by $b(G)$ the
minimal size of a base for $G$.   An ordered sequence of elements of $\Omega$, $\Sigma:=(\omega_1,\dots, \omega_t)$, is \emph{irredundant} for $G$ if no point in $\Sigma$ is fixed by the stabiliser of its predecessors. Moreover $\Sigma$ is an \emph{irredundant base} of $G$, if it is a base and it is irredundant. In particular, such an irredundant base provides the following stationary chain
$$G>G_{\omega_1}>G_{(\omega_1, \omega_2)}>\dots>G_{(\omega_1,\dots, \omega_{t-1})}>G_{(\omega_1,\dots, \omega_t)}=1,$$
where the inclusion of subgroups are strict.


Irredundant bases for a permutation group possess some of the features of bases in a vector space.
Indeed, $\BB$ is a basis for a vector space $V$ if and only if it is an irredundant base for the general linear group $\GL(V)$ in its natural action on $V.$ 
However, some familiar properties of bases in vector spaces do not extend to bases for permutation groups: irredundant bases for groups in general are not preserved by reordering, and they can have different sizes.

In~\cite{ibis}, Cameron and Fon-Der-Flaass showed that all irredundant bases for a permutation group $G$ have the same size if and only if all the irredundant bases for $G$ are preserved by re-ordering. Groups satisfying one of the previous equivalent properties are called \emph{Irredundant Bases of Invariant Size groups}, \emph{IBIS groups} for short.
Moreover, Cameron and Fon-Der-Flaass~\cite{ibis} also proved that for a permutation group $G$ to be IBIS is a necessary and sufficient for the irredundant bases of $G$ to be the base of a combinatorial structure known with the name of matroid.  If this condition hold, then $G$ acts geometrically on the matroid and when $G$ acts primitively and is not cyclic of prime order, then the matroid is geometric (see~\cite{ibis} for more details). This brought Cameron to ask for a possible classification of the IBIS groups. As explained by Cameron himself in~\cite[Section 4-14]{cam}, there is no hope for a complete classification of IBIS groups, when the cardinalities of the bases are large. But it might be reasonable to pose this question for primitive groups. The first attempt of classifying finite primitive IBIS groups has been done in \cite{lmm}. Their
approach is via the O'Nan-Scott classification of primitive groups.
\begin{thm} \cite[Theorem 1.1]{lmm} Let $G$ be a primitive IBIS group. Then one of the following holds:
	\begin{enumerate}
\item $G$ is of affine type,
\item $G$ is almost simple,
\item $G$ is of diagonal type.
\end{enumerate}
Moreover, G is a primitive IBIS group of diagonal type if and only if it belongs to the infinite family of diagonal groups
$\{\PSL(2,2^f
	) \times \PSL(2,2^f
	) | f\in \mathbb N, f\geq 2\}$ having degree $|\PSL(2,2^f
)| = 2^f
(4^f-1).$
\end{thm}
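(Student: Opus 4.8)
The plan is to run through the O'Nan--Scott types and show that only the affine, almost simple and simple diagonal types can be IBIS, and then to pin down the diagonal examples. Since the affine and almost simple cases are simply retained (no classification of them is claimed), the real content of the first assertion is to exclude the product action and twisted wreath types. In both of these a product or regular structure on the socle lets one redistribute base points unevenly: for a product action subgroup of $H\wr\Sym(k)$ acting on $\Delta^k$ one builds one irredundant base by filling the coordinates one at a time (so that the top group is resolved late) and a second one by interleaving the coordinates, and these can be arranged to have different lengths; for twisted wreath type, where the socle $T^k$ acts regularly, one compares a base built from a small number of socle elements with a coordinatewise one. In each case two irredundant bases of different size contradict IBIS; this is the O'Nan--Scott reduction of \cite{lmm}.

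For the diagonal type, write the socle as $N=T^k$ with $T$ nonabelian simple, acting on $\Omega=T^k/D$ where $D=\{(t,\dots,t):t\in T\}$ is the straight diagonal and $o=D$ is the base point, so $N_o=D\cong T$. Representing a further point $p_i$ by a tuple $(x_{i,2},\dots,x_{i,k})\in T^{k-1}$, a direct computation gives that the stabiliser of $o$ together with $p_1,\dots,p_j$ is the diagonal of $\bigcap_{i,\ell}C_T(x_{i,\ell})$. Thus an irredundant base corresponds to a strictly descending chain $T=B_1\supsetneq B_2\supsetneq\dots\supsetneq B_r=1$ in which each $B_{j+1}$ arises from $B_j$ by intersecting with one or more centralisers of elements of $T$, and IBIS says exactly that all such maximal chains have the same length. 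This forces $k=2$: with at least two free coordinates a single point whose coordinates generate $T$ resolves the whole intersection $C_T(x_2)\cap\dots\cap C_T(x_k)$ at once, whereas points with a single nontrivial coordinate resolve one centraliser at a time, and the resulting chains have different lengths (the outer-automorphism and coordinate-permuting part of $G_o$ being resolved separately, which does not remove the discrepancy).

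When $k=2$ each additional point contributes exactly one centraliser, so every base has size at least $3$ (as $C_T(x)\ni x$ is nontrivial), and I claim $T\times T$ is IBIS if and only if $T$ is a \emph{CA-group}, i.e. $C_T(x)$ is abelian for every $x\neq 1$. If $T$ is CA, then any $B_2=C_T(x)$ is abelian and, in such a $T$, the centraliser of any nontrivial element is a self-centralising abelian subgroup (a maximal torus or a Sylow $2$-subgroup), so no element lies in $B_2$ without centralising all of it; hence no centraliser intersection lies strictly between $B_2$ and $1$, and every base has length exactly $3$. Conversely, if some $C_T(x)$ is nonabelian one descends $T\supsetneq C_T(x)\supsetneq C_{C_T(x)}(y)\supsetneq\dots\supsetneq 1$ through a noncentral $y\in C_T(x)$ to get a base of length $\ge 4$, while a generating pair gives one of length $3$, violating IBIS. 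By Suzuki's classification of the finite simple CA-groups, $T\cong\PSL(2,2^f)$ with $f\ge 2$ (note $\PSL(2,4)\cong A_5$ already occurs), which is precisely the asserted family.

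It remains to see that the socle itself is primitive and that no proper overgroup survives. Since the only subgroups of $T\times T$ containing the diagonal are $D$ and $T\times T$, the subgroup $D$ is maximal and the socle alone is primitive of diagonal type, hence IBIS by the previous paragraph. For a proper extension by a field automorphism or by the coordinate swap, the point stabiliser enlarges to $D.\langle\sigma\rangle$ or $D\times\langle\tau\rangle$, and the same centraliser-chain analysis, now carried out inside $T.\langle\sigma\rangle$ or taking the inversion action of $\tau$ into account, produces irredundant bases of unequal length; this is a finite case check. The main obstacle is the analysis of the third paragraph: translating IBIS into equality of lengths of all maximal centraliser-intersection chains and recognising that this is exactly the CA condition, so that the deep input is Suzuki's determination of the simple CA-groups. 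By comparison, excluding the product action and twisted wreath types, and the proper diagonal extensions, is routine once two bases of different size are exhibited.
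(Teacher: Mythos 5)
This theorem is not proved in the paper at all: it is quoted verbatim from \cite{lmm}, so your attempt can only be judged against that source and on its own merits. Your treatment of the core case --- the bare socle of diagonal type with $k=2$ --- is correct and is essentially the known route: identify $\Omega$ with $T$, observe that the stabiliser of the base point together with a further point $x$ is the diagonal of $C_T(x)$, so that irredundant bases correspond to strictly decreasing chains of intersections of centralisers; these all have the same length exactly when distinct centralisers of nontrivial elements meet trivially (the CA property), and Suzuki's classification of simple CA-groups gives $T\cong \PSL(2,2^f)$, $f\ge 2$. Two local repairs: your justification that a CA-group admits no intermediate intersections appeals to ``a maximal torus or a Sylow $2$-subgroup'', which presupposes the answer and is circular; the correct argument is that a nontrivial $z\in C_T(x)\cap C_T(x')$ gives $x,x'\in C_T(z)$ with $C_T(z)$ abelian, whence $C_T(x)\subseteq C_T(z)\subseteq C_T(x')$. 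The base of size $3$ in the non-CA case does come, as you say, from a generating pair (every finite simple group is $2$-generated).

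The gaps lie in everything else, and they are not cosmetic, because the excluded cases are most of the content of the theorem. First, the elimination of the product action and twisted wreath types is pure assertion: ``these can be arranged to have different lengths'' is the entire argument, and no pair of irredundant bases of unequal size is actually exhibited; you also never address the compound diagonal and holomorph compound types, which are absorbed into the types you do mention only in the coarse five-class form of O'Nan--Scott, a point that must be made explicit. Second, for diagonal type with $k\ge 3$, primitivity forces $G$ to induce a primitive subgroup of $\Sym(k)$ on the coordinates, so $G$ properly contains the socle; your chain-length comparison tracks only the socle part of the stabilisers, and the parenthetical claim that the coordinate-permuting and outer parts are ``resolved separately, which does not remove the discrepancy'' is precisely what needs proof. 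Third, and most seriously, dismissing the proper extensions of $T\times T$ (coordinate swap, outer automorphisms) as ``a finite case check'' is wrong: there are infinitely many such groups, one family for each $f$, and the check is genuinely delicate. For instance, in $G=(T\times T)\langle\tau\rangle$ with $T=\PSL(2,2^f)=\ssl(2,2^f)$ and $\tau$ the swap, every generating pair $x,y$ of $T$ is simultaneously inverted by some $a\in T$, because $(x,y)$ and $(x^{-1},y^{-1})$ have the same trace triple $(\mathrm{tr}\,x,\mathrm{tr}\,y,\mathrm{tr}\,xy)$ and irreducible pairs with equal trace triples are conjugate; hence the stabiliser of $(D,x,y)$ contains the involution $(a,a)\tau$, and every chain through a generating pair has length at least $4$, not $3$ as in the socle. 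Already for $T=A_5$ and $x$ of order $5$ a counting argument (each element outside $\langle x\rangle$ is inverted by exactly one of the five reflections of $x$) shows that no third point reduces the stabiliser of $(D,x)$ to the identity. So the IBIS analysis of the extensions is structurally different from that of the socle, and the uniform argument which \cite{lmm} supplies is exactly what your proposal omits.
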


In the light of this result, the problem of understanding finite primitive IBIS groups is reduced to affine groups and to almost simple groups. There is some hope for dealing with the second family. Indeed, suppose that $G$ is an almost simple primitive IBIS group with $b(G) = 2.$ In particular, all bases of $G$ have cardinality 2. Thus, for any two distinct points $\alpha$ and $\beta$ in the domain of $G$ we have $G_\alpha \cap G_\beta=1.$ Therefore $G$ is a Frobenius group, contradicting the fact that $G$ is almost simple. Thus, if $G$ is an almost simple primitive IBIS group, then $b(G) \geq 3$ and one can get useful information from the work of Burness, O'Brien, Liebeck, Saxl, Shalev and Wilson on the base size of primitive permutation groups 
(see for instance \cite{b1}, \cite{b2}, \cite{b3}, \cite{b4}). A first result in this direction has been obtained in \cite{lp}, where the authors determine the almost simple primitive IBIS groups having socle an alternating group.

Dealing with the primitive groups of affine type seems very difficult. In this case $G$ is a semidirect product $G= V \rtimes X,$ where $X$ is an irreducible subgroup  of $\GL(V)$. It follows immediately from the definition that $G$ is an IBIS permutation group if and only if $X$ induces an IBIS permutation group  on the set $V\setminus \{0\}$, in which case we will say that $X$ is a linear IBIS group, or more precisely, that $X$ is an IBIS subgroup of $\GL(V).$

In this paper we are interested in collecting information on the IBIS irreducible linear groups that are soluble. Given $G\leq \GL(V)$, we will denote by $b(G)$ the smallest cardinality of a base for the action of $G$ 
on $V\setminus \{0\}.$ By the main result in \cite{as}, $b(G)\leq 3$ for any soluble irreducible linear group $G.$ 
Notice that $b(G)=1$ if and only if $G$ has a regular orbit on $V$ and 
that in this case  $G$ is IBIS if and only if it  is a Frobenius complement in $V\rtimes G$, i.e.  if and only if $G$ acts fixed-point-freely on $V.$ In a very resent paper Yang, Vasil'ev and Vdovin \cite{yvv}
proved that an irreducible quasi-primitive soluble subgroup of $\GL(V)$ which is not metacylic has a regular orbit, except for a few \lq\lq small\rq\rq\ cases. These cases have been completely classified in a subsequent paper by Holt and Yang \cite{HY}, and so it is possible to produce with MAGMA the  list of all the primitive irreducible non-metacyclic groups $G$ that are IBIS with $b(G)\neq 1.$ More precisely we can state the following results:

\begin{thm}\label{qpri}
	If $G$ is a soluble irreducible quasi-primitive IBIS subgroup of $\GL(V),$ then one of the following occurs:
	\begin{enumerate}
		\item $G$ is metacyclic;
		\item $G$ is a Frobenius complement;
		\item $G$ is one of the 39 exceptional linear groups listed in Table  \ref{table1}.
	\end{enumerate}
\end{thm}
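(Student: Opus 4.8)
The plan is to argue by a dichotomy on whether $G$ is metacyclic, feeding the non-metacyclic case into the regular-orbit theorem of \cite{yvv} and disposing of the finitely many exceptions by a direct computation. If $G$ is metacyclic we land in conclusion~(1), so I would assume from now on that $G$ is soluble, irreducible, quasi-primitive and \emph{not} metacyclic.

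The key reduction is the characterization of IBIS groups with base size $1$. Suppose $b(G)=1$, so that some nonzero vector has trivial stabilizer. If $G$ is IBIS then every irredundant base has length $1$; but if some nonzero $v$ had $G_v\neq 1$, then, since $G\leq\GL(V)$ acts faithfully, there is a vector $w$ not fixed by $G_v$, and $(v,w)$ begins an irredundant chain that extends to an irredundant base of length at least $2$, a contradiction. Hence every nonzero vector has trivial stabilizer, so $G$ acts fixed-point-freely and is a Frobenius complement --- conclusion~(2). This shows that it suffices to understand the groups with $b(G)\neq 1$.

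Now I would invoke \cite{yvv}: a soluble irreducible quasi-primitive subgroup of $\GL(V)$ that is not metacyclic has a regular orbit on $V$ --- equivalently $b(G)=1$ --- except for an explicit finite collection of \lq\lq small\rq\rq\ configurations. For all but these exceptions the previous paragraph already forces conclusion~(2). The exceptional groups are precisely those with $b(G)\neq 1$, and by the bound $b(G)\leq 3$ of \cite{as} they satisfy $b(G)\in\{2,3\}$; moreover they have been completely classified by Holt and Yang \cite{HY}, giving a finite, explicit list.

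It then remains to inspect this list group by group. For each candidate I would use \MAGMA to enumerate the irredundant bases --- equivalently, the maximal strictly descending chains of pointwise stabilizers --- and test whether they all have the same length. Retaining precisely the groups that pass this test yields the $39$ groups recorded in Table~\ref{table1} and hence conclusion~(3). The main obstacle here is computational rather than conceptual: organizing the Holt--Yang exceptions into a form on which the IBIS test can be run, and making sure that the passage from quasi-primitivity to the (primitive) representatives treated in \cite{HY} loses no groups. A secondary point requiring care is that a non-metacyclic group admitting a regular orbit but failing to be fixed-point-free is simply not IBIS, so no such group should be added to Table~\ref{table1}.
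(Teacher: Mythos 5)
Your proposal is correct and takes essentially the same route as the paper: reduce via the observation that an IBIS group with a regular orbit (i.e.\ $b(G)=1$) must act fixed-point-freely and hence be a Frobenius complement, invoke the Yang--Vasil'ev--Vdovin regular-orbit theorem for non-metacyclic quasi-primitive soluble groups, and settle the finitely many Holt--Yang exceptions by a \MAGMA search for irredundant bases. The only difference is cosmetic: you write out the short argument that IBIS together with $b(G)=1$ forces triviality of all point stabilizers, which the paper merely asserts in its introduction before citing \cite{yvv} and \cite{HY}.
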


The metacyclic case is investigated in Section \ref{metac}. If $V$ is a finite vector space of dimension $n$ over $\GF(q),$ where $q$ is a prime power, we denote by $\Gamma(q^n)=\Gamma(V)$ the semilinear group of $V,$
 i.e. $$\Gamma(q^n)=\{x \mapsto ax^\phi\mid x\in \GF(q^n), a \in (\GF(q^n))^*, \phi \in \gal(\GF(q^n)/\GF(q))\}.$$ 
This group has a normal cyclic subgroup $N$ of order $q^n-1$ (a so called
Singer cycle) consisting of those elements with $\phi=1.$ The subgroup $N$ acts regularly on $V\setminus\{0\},$ and $\Gamma(q^n)/N$ is cyclic of order $n.$ Our main result is the following:


\begin{thm}\label{meta}
	Suppose that $G\leq \Gamma(q^n),$ with $n>1$ and $|G/G\cap N|=n.$ If $G$ is IBIS, then either $G$ is a Frobenius complement or there exists a prime divisor $s$ of $n$ such that, for every $v\in  
	V\setminus\{0\},$ the stabilizer $G_v$ has order $s.$ More precisely, if
	$\phi$ is the automorphism of $\GL(q^n)$ sending to $f$ to $f^q$ and $\phi b\in G$ for some $b \in N,$ then $G$ is IBIS if and only if there exists a divisor $r$ of $n$ such that:
	\begin{enumerate}
		\item $N^{q^r-1}\leq G;$
		\item for every proper divisor $t$ of $r,$ $b^{(q^t-1)/(q-1)}\notin N^{q^t-1}(N\cap G).$
				\item one of the following occurs:
				\begin{enumerate}
		\item $s=n/r$ is a prime and $b^{(q^r-1)/(q-1)}\in N\cap G;$
		\item $n=r$. 
		\end{enumerate}
	\end{enumerate}
	In case (a) the  stabilizers in $G$ have order $s,$ in case (b) $G$ is a Frobenius complement.
\end{thm}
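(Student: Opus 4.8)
The plan is to translate the action of $G$ into a combinatorial problem on the cyclic Galois group and then read off the IBIS property from a subgroup lattice. First I would identify $V\setminus\{0\}$ with $\GF(q^n)^*$, on which $N$ acts regularly by multiplication, and write every element of $\Gamma(q^n)$ as $x\mapsto ax^{q^i}$. Writing $g=\phi b\colon x\mapsto bx^q$, an easy induction gives $g^i\colon x\mapsto b^{(q^i-1)/(q-1)}x^{q^i}$, so the coset $g^i(N\cap G)$ consists of the maps $x\mapsto b^{m_i}w\,x^{q^i}$ with $w\in N\cap G$ and $m_i=(q^i-1)/(q-1)$. Since $N\cap G$ acts without nonzero fixed points, for $v\ne 0$ the stabilizer $G_v$ meets $N\cap G$ trivially and maps injectively onto the subgroup $\Gamma_v=\{\,i\in\mathbb Z/n : (bv^{q-1})^{m_i}\in N\cap G\,\}$ of $H:=\gal(\GF(q^n)/\GF(q))\cong\mathbb Z/n$; in particular $G_v$ is cyclic of order dividing $n$. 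A direct computation then identifies the iterated stabilizers: $G_{(v_1,\dots,v_t)}\cong \Gamma_{v_1}\cap\bigcap_i \mathrm{Stab}_H(v_i/v_1)$, where $\mathrm{Stab}_H(w)$ is the stabilizer of $w$ in the natural action of $H$ on $\GF(q^n)^*$.

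The structural point is that, as $w$ runs over $\GF(q^n)^*$, the subgroup $\mathrm{Stab}_H(w)=\mathrm{Gal}(\GF(q^n)/\GF(q)(w))$ runs over all subgroups of the cyclic group $H$ (take $w$ a generator of the subfield $\GF(q^e)$ to realize $\langle\phi^e\rangle$). Hence, after fixing a first point $v_1$, every strictly descending chain of subgroups of $\Gamma_{v_1}$ down to $1$ is realized by a suitable continuation $v_2,v_3,\dots$, and conversely. Since the subgroup lattice of a cyclic group is its divisor lattice, the lengths of such chains range over all of $\{1,\dots,\Omega(|\Gamma_{v_1}|)\}$, so the achievable sizes of irredundant bases beginning at $v_1$ form an interval with more than one value unless $\Gamma_{v_1}$ is trivial (only size $1$) or of prime order (only size $2$). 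Imposing the IBIS condition therefore forces every $\Gamma_v$ to be trivial or of prime order and forbids mixing the two possibilities. Thus either every $\Gamma_v$ is trivial, in which case no nontrivial element of $G$ has a nonzero fixed vector and $G$ is a Frobenius complement with $b(G)=1$, or every $\Gamma_v$ has prime order and $b(G)=2$.

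It remains to treat the second case, where one must show the prime is the same for all $v$ and pin down $r$. Rewriting the defining condition as $i\in\Gamma_v\iff b^{m_i}v^{q^i-1}\in N\cap G$ and using that $v^{q^i-1}$ runs over $N^{q^i-1}$ as $v$ varies, one gets two clean statements: the degree $i$ occurs in some point stabilizer iff $b^{m_i}\in (N\cap G)N^{q^i-1}$, and it occurs in \emph{every} point stabilizer iff $N^{q^i-1}\le N\cap G$ and $b^{m_i}\in N\cap G$. I would let $r$ be the least positive degree occurring in some stabilizer; a witness $u$ then gives $\Gamma_u=\langle r\rangle$ of prime order $n/r$. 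The heart of the argument, and the step I expect to be the main obstacle, is to upgrade this to $N^{q^r-1}\le N\cap G$, equivalently to prove $r\in\Gamma_v$ for all $v$, equivalently that all the prime orders coincide. For this I would analyze the divisibility $\delta\mid m_i$, with $\delta$ the order of $bv^{q-1}$ modulo $N\cap G$, prime by prime via the lifting-the-exponent lemma, distinguishing the primes dividing $q-1$ from the rest, to compute the least such $i$ as an explicit lcm of orders of $q$; feeding in that $bv^{q-1}$ ranges over a fixed coset of the $(q-1)$st powers, one shows that the presence of two distinct prime orders would produce a vector whose stabilizer has composite or trivial order, contradicting the dichotomy above.

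Once $N^{q^r-1}\le N\cap G$ is secured, the characterization follows. In the Frobenius-complement case take $r=n$: then $N^{q^n-1}=1$ gives (1), every proper divisor $t$ of $n$ fails to occur in any stabilizer, giving (2) and $b^{m_t}\notin N^{q^t-1}(N\cap G)$, and (3b) holds. In the prime-order case, $r$ as above satisfies (1) by the displayed equivalence, while its minimality together with the prime-order dichotomy yields (2); since under (1) one has $r\in\Gamma_v\iff b^{m_r}\in N\cap G$ uniformly in $v$, condition (2) forces $\Gamma_v=\langle r\rangle$ for every $v$, so every stabilizer has order $s=n/r$, necessarily prime, which is (3a). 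The converse is the easy direction: assuming $r$ with (1),(2),(3), the same equivalences give $\Gamma_v=\langle r\rangle$ of order $s$ in case (a) and $\Gamma_v=1$ in case (b) for every $v$, so by the lattice description of the second paragraph all irredundant bases have size $2$, respectively size $1$, and $G$ is IBIS.
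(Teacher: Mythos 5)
Your lattice-theoretic framework is sound and quite elegant: the identification of $G_v$ with the subgroup $\Gamma_v\leq \mathbb{Z}/n$, the formula $G_{(v_1,\dots,v_t)}\cong \Gamma_{v_1}\cap\bigcap_{i\geq 2}\mathrm{Stab}_H(v_i/v_1)$, the fact that every subgroup of $H$ is realized as some $\mathrm{Stab}_H(w)$, and the resulting dichotomy (IBIS forces every $\Gamma_v$ to be trivial, or every $\Gamma_v$ to have prime order) are all correct; this subsumes the paper's Lemma \ref{ibprim} and also gives a cleaner converse direction than the paper's one-line remark. (One shared caveat: your "forbids mixing" claim, like the paper's own argument, implicitly assumes every nonzero vector can start an irredundant base, i.e.\ that $G$ fixes no nonzero vector; this is automatic whenever $N\cap G\neq 1$.) However, the proposal has a genuine hole exactly at the point you yourself call "the main obstacle": showing that in the prime-order case the prime is the same for every $v$, equivalently that one divisor $r$ satisfies $r\in\Gamma_v$ for all $v$, equivalently condition (1), $N^{q^r-1}\leq N\cap G$. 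What you write there is a plan, not a proof: "one shows that the presence of two distinct prime orders would produce a vector whose stabilizer has composite or trivial order" is precisely the assertion that must be established, and nothing in the proposal establishes it. Note that your chain argument gives no leverage here: if different points had stabilizers of different prime orders, all irredundant bases would still have size $2$ and $G$ would still be IBIS by your own reasoning, so the coincidence of the primes must be extracted from the arithmetic of $\Gamma(q^n)$, not from base sizes.

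The paper closes this gap with a short Bezout computation (Lemma \ref{ibse}) that your sketch would essentially have to rediscover. If $G_1=\langle \phi^{r_1}\rangle$ and $G_a=\langle \phi^{r_2}a^{1-q^{r_2}}\rangle$, write $d=\gcd(r_1,r_2)=xr_1+yr_2$; using $(\phi^{r_2}a^{1-q^{r_2}})^y=\phi^{r_2y}a^{1-q^{r_2y}}$ one gets
$$(\phi^{r_1})^x(\phi^{r_2}a^{1-q^{r_2}})^y=\phi^{d}\,(a^{u})^{1-q^{d}}\in G_{a^u},\qquad u=\tfrac{1-q^{r_2y}}{1-q^{d}},$$
so $n/d$ divides $|G_{a^u}|$; by the prime-order dichotomy $n/d$ must be prime, which forces $d=r_1=r_2$. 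The reason your purely arithmetic route (orders $\delta_v\mid m_i$, lifting-the-exponent, lcm's of orders of $q$) is not straightforward is that $i\mapsto m_i=(q^i-1)/(q-1)$ is not additive but satisfies the twisted rule $m_{i+j}=m_i+q^i m_j$; divisibility conditions at different points therefore cannot be combined by gcd/lcm reasoning alone, and the twisted Bezout manipulation above---multiplying actual group elements rather than comparing exponents---is exactly what replaces it. Everything else in your proposal (deriving condition (2) from minimality and prime order, case (b), and the converse implication) is correct once this lemma is granted; but until this step is supplied, the existence of the common prime $s$ and condition (1) are unproved, so the proposal as written does not prove the theorem.
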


In Section \ref{metac}, we will apply the previous theorem to produce some non-trivial examples of IBIS metacyclic linear groups. Two interesting corollaries of Theorem \ref{meta} are the following:
\begin{cor}
	Suppose $n>1.$ Then $\Gamma(q^n)$ is IBIS if and only if $n$ is a prime. 
\end{cor}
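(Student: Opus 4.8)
The plan is to obtain this corollary as a direct specialization of Theorem \ref{meta} to the full semilinear group $G=\Gamma(q^n)$, reducing the problem to checking the arithmetic conditions (1)--(3). The first step is to read off the relevant data. Since $\Gamma(q^n)$ contains the whole Singer cycle we have $N\cap G=N$, and since the Frobenius automorphism $\phi$ itself lies in $\Gamma(q^n)$ we may take $b=1$ in the statement of the theorem; with these two observations every condition becomes purely numerical.

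I would then dispose of conditions (1) and (2). Condition (1), $N^{q^r-1}\le G$, holds for every divisor $r$ of $n$ simply because $N\le G$. The decisive point is condition (2): with $b=1$ the element $b^{(q^t-1)/(q-1)}$ equals $1$, and $1\in N=N^{q^t-1}(N\cap G)$ for every $t$. In particular, taking $t=1$ (which is a proper divisor of $r$ as soon as $r>1$) shows that condition (2) fails for every $r>1$. Hence the only divisor $r$ of $n$ for which the criterion of Theorem \ref{meta} can hold is $r=1$, for which (2) is vacuous.

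It remains to examine condition (3) in the surviving case $r=1$. Alternative (b) would force $n=r=1$, contrary to the hypothesis $n>1$, so the criterion reduces to alternative (a): here $s=n/r=n$ must be a prime, while the membership $b^{(q^r-1)/(q-1)}=1\in N\cap G$ is automatic. Thus the conditions of Theorem \ref{meta} are satisfiable precisely when $n$ is prime, which is the asserted equivalence. I do not anticipate any real obstacle: the entire content lies in Theorem \ref{meta}, and the only delicate point is recognizing that it is the proper divisor $t=1$ in condition (2) that eliminates every composite possibility and pins down $r=1$.
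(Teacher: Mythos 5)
Your proposal is correct and follows exactly the route the paper intends: the corollary is stated as an immediate specialization of Theorem \ref{meta} (with $G=\Gamma(q^n)$, so $N\cap G=N$ and $b=1$), and your observation that condition (2) with the proper divisor $t=1$ rules out every $r>1$, leaving only $r=1$ and hence alternative (a) with $s=n$ prime, is precisely the verification the paper leaves implicit.
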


\begin{cor}\label{corodd}	Suppose that $G\leq \Gamma(q^n),$ with $n$ a prime and $|G/G\cap N|=n.$ Then $G$ is IBIS  if and only if one of the following occurs:
	\begin{enumerate}
		\item 
		$G=(N\cap G) \langle \phi \rangle,$ with  $N^{q-1}\leq G;$
		\item $G=(N\cap G) \langle \phi b\rangle,$ for some $b \in N$ such that $b\notin N^{q-1}(N\cap G).$
		\end{enumerate}
\end{cor}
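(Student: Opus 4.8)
The plan is to obtain Corollary \ref{corodd} as a direct specialization of Theorem \ref{meta} to the case where $n$ is prime, so no new machinery is needed beyond a careful translation of the conditions. First I would check that the hypothesis of Theorem \ref{meta} is automatically satisfied here: since $\Gamma(q^n)/N$ is cyclic of order $n$ generated by $\phi N$ and $|G/(G\cap N)| = n$, the isomorphism $G/(G\cap N)\cong GN/N$ forces $GN = \Gamma(q^n)$; hence $\phi N \in GN/N$, and so $\phi b \in G$ for some $b\in N$. Thus Theorem \ref{meta} tells us that $G$ is IBIS if and only if there is a divisor $r$ of $n$ satisfying its conditions (1)--(3). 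Because $n$ is prime, the only candidates are $r=1$ and $r=n$, and I would analyze these two cases separately.

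For $r=n$, condition (1) reads $N^{q^{n}-1}\leq G$, which is automatic since $N$ has order $q^{n}-1$; condition (3)(b) holds as $n=r$; and the unique proper divisor of $n$ is $t=1$, so condition (2) collapses to $b^{(q-1)/(q-1)}=b\notin N^{q-1}(N\cap G)$. As $G=(N\cap G)\langle\phi b\rangle$, this is precisely alternative (2) of the corollary. For $r=1$, condition (2) is vacuous (there are no proper divisors of $1$); condition (3)(a) holds with $s=n/1=n$ prime and requires $b^{(q-1)/(q-1)}=b\in N\cap G$; and condition (1) reads $N^{q-1}\leq G$. Here $b\in N\cap G$ gives $\phi=(\phi b)b^{-1}\in G$, whence $G=(N\cap G)\langle\phi\rangle$, and together with $N^{q-1}\leq G$ this is exactly alternative (1). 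Assembling the two cases, $G$ is IBIS if and only if the $r=1$ conditions or the $r=n$ conditions hold, i.e. if and only if (1) or (2) holds; moreover the two alternatives are mutually exclusive, since (1) forces $b\in N\cap G\subseteq N^{q-1}(N\cap G)$ whereas (2) requires $b\notin N^{q-1}(N\cap G)$.

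I expect the argument to be essentially bookkeeping, so there is no serious obstacle; the one point deserving explicit attention is that the membership conditions do not depend on the particular choice of representative $b$ within its coset $b(N\cap G)$. Since $N$ is abelian, replacing $b$ by $bc$ with $c\in N\cap G$ multiplies $b^{(q^{t}-1)/(q-1)}$ by $c^{(q^{t}-1)/(q-1)}\in N\cap G$, which leaves membership in $N^{q^{t}-1}(N\cap G)$ unchanged; verifying this once legitimizes the passage from the arithmetic conditions of Theorem \ref{meta} to the structural descriptions in (1) and (2).
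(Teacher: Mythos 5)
Your proof is correct and follows exactly the route the paper intends: Corollary \ref{corodd} is stated there as an immediate consequence of Theorem \ref{meta}, with no separate proof given, and your case analysis $r\in\{1,n\}$ (together with the observation that $GN=\Gamma(q^n)$ forces $\phi b\in G$, and that the conditions are independent of the representative $b$ modulo $N\cap G$) supplies precisely the bookkeeping the paper leaves implicit.
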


For example consider the subgroup $G$ of $\Gamma(5^2)$ generated by $\phi$ and $a\in N$  with $|a|=3:$ since $|N^4|=6,$ $G$ is not IBIS.

The study of the IBIS property in the case of imprimitive soluble linear groups is much more difficult. Apparently there is no strategy for a reduction from the imprimitive to primitive case. However we are able to solve the problem in the particular case of nilpotent irreducible linear groups. First we describe two families of irreducible linear groups that are IBIS. The following is a consequence of Corollary \ref{corodd}.

\begin{prop}\label{uno}Let $r=2^b-1$ be a Mersenne prime.  Consider the following two subgroups of $\Gamma(r^2)=N\langle \phi\rangle:$
$P_1=O_2(N)\langle \phi \rangle$ (a semidihedral group of size $2^{b+2}$) 
	and $P_2=(O_2(N))^2\langle \phi \rangle$ (a dihedral group of size $2^{b+1}$). 
	If $G=PC\leq \Gamma(r^2)$ with $P\in \{P_1, P_2\}$ and $C\leq O_{2^\prime}(N),$ then $G$ is an irreducible nilpotent IBIS subgroup of $\Gamma(r^2)$ and $b(G)=2.$
\end{prop}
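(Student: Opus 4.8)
The plan is to recognize each such $G$ as one of the groups classified in Corollary \ref{corodd}, read off the IBIS property and the stabilizer order from there, and then verify the remaining assertions (nilpotency, irreducibility and $b(G)=2$) by direct, elementary computation using the arithmetic of the Mersenne prime $r=2^b-1$. First I would record that arithmetic. Here $V=\GF(r^2)$, so $n=2$ is prime and $q=r$, and $N$ is cyclic of order $r^2-1=(r-1)(r+1)$. Since $r+1=2^b$ and $r-1=2(2^{b-1}-1)$ with $2^{b-1}-1$ odd, the Sylow $2$-subgroup $O_2(N)$ is cyclic of order $2^{b+1}$, its group of squares $(O_2(N))^2$ is the unique subgroup of $N$ of order $2^b=r+1$, and $O_{2'}(N)$ is cyclic of order $2^{b-1}-1$. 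The key identity I would establish is $N^{q-1}=(O_2(N))^2$: indeed $N^{r-1}$ has order $(r^2-1)/(r-1)=r+1=2^b$, and being a $2$-group it must be the unique subgroup of that order inside $O_2(N)$. (Along the way, computing the action $g\mapsto g^{r}$ of $\phi$ on $O_2(N)=\langle g\rangle$ gives the semidihedral presentation for $P_1$, while $g^2\mapsto (g^2)^{-1}$ on $(O_2(N))^2$ gives the dihedral one for $P_2$.) Consequently, for $P\in\{P_1,P_2\}$ we have $(O_2(N))^2\le P\cap N\le G\cap N$, so $N^{q-1}\le G$.

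Next I would check that $G$ meets the hypotheses of Corollary \ref{corodd}. Since $\phi\in P\le G$ and $\phi\notin N$, the image of $G$ is all of $\Gamma(r^2)/N\cong C_2$, so $|G/(G\cap N)|=2=n$ and $G=(G\cap N)\langle\phi\rangle$. Together with $N^{q-1}\le G$ this is precisely case (1) of Corollary \ref{corodd} (equivalently Theorem \ref{meta} with divisor $1$, trivial $b$, and $s=n/1=2$), so $G$ is IBIS; moreover, since $\phi$ (the map $x\mapsto x^r$) fixes every nonzero element of the subfield line $\GF(r)$, the action is not fixed-point-free, so $G$ is not a Frobenius complement and we are in case (a) of Theorem \ref{meta}, giving $|G_v|=s=2$ for every $v\in V\setminus\{0\}$. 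From this $b(G)=2$ follows at once: a single nonzero vector has nontrivial stabilizer, so $b(G)\ge 2$, while choosing $\omega_1$ arbitrarily and $\omega_2$ not fixed by the involution generating the order-$2$ group $G_{\omega_1}$ produces a base of size $2$; as $G$ is IBIS this is the common base size, whence $b(G)=2$.

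For nilpotency I would show $G=P\times C$. As $P$ is a $2$-group and $C\le O_{2'}(N)$ has odd order we have $P\cap C\le O_2(N)\cap O_{2'}(N)=1$, so it only remains to see that $P$ centralizes $C$. The elements of $P$ lying in $N$ commute with $C$ because $N$ is abelian, so the single point to check is $[\phi,C]=1$; but $|C|$ divides $2^{b-1}-1$, which divides $r-1$, hence $C\le\GF(r)^*$, and $\phi$ acts trivially on $\GF(r)^*$. Thus $[\phi,C]=1$, the product $G=PC=P\times C$ is a direct product of a $2$-group and an abelian group of odd order, and is therefore nilpotent. Irreducibility I would obtain by showing $G$ fixes no $\GF(r)$-line: a map $x\mapsto ax$ with $a\in N$ stabilizes some (equivalently every) line exactly when $a\in\GF(r)^*$, i.e. when the order of $a$ divides $r-1$, whose $2$-part is only $2$. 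Since $(O_2(N))^2\le G$ is cyclic of order $2^b\ge 4$ it contains an element $a$ of order $4$, which therefore does not lie in $\GF(r)^*$; the corresponding element of $G$ fixes no line, so $G$ is irreducible.

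I expect the main obstacle to be the bookkeeping that pins down $N^{q-1}=(O_2(N))^2$ and $O_{2'}(N)\le\GF(r)^*$ from the factorization $r^2-1=2^{b+1}(2^{b-1}-1)$: these are exactly the places where the Mersenne condition $r+1=2^b$ is used, and getting the $2$-parts right is what makes the containment $N^{q-1}\le G$ (hence the IBIS conclusion via Corollary \ref{corodd}) and the centralizing of $C$ by $\phi$ (hence the nilpotency) work at the same time.
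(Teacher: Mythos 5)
Your proof is correct and follows exactly the route the paper intends: the paper offers no separate argument for this proposition beyond the remark that it ``is a consequence of Corollary \ref{corodd},'' and your verification (identifying $N^{q-1}=(O_2(N))^2$, checking the hypotheses of Corollary \ref{corodd}, and then confirming nilpotency via $G=P\times C$, irreducibility via an element of order $4$, and $b(G)=2$ from the order-$2$ stabilizers) fills in precisely the details that deduction requires.
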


\begin{prop}\label{due}
	Let $W$ be the additive group of a finite field $F$ and let $H$ be the multiplicative group of $F$, acting on $W$ by scalar multiplication and consider the wreath product $X=H\wr S_2$ in its imprimitive action on $V=W^2.$ Let $G$ be a subgroup of $X$ satisfying the following conditions:
	\begin{enumerate}
		\item $\langle (1,2), (h,h^{-1})\mid h\in H\rangle \subseteq G$ for every $h\in H$;
		\item $(k,1)\in G$ for some $1\neq k \in H.$
	\end{enumerate}
	Then $G$ is an irriducible IBIS subgroup of $\GL(V),$ with $b(G)=2.$
	In particular if $H$ is a 2-group, then $G$ is a nilpotent irreducible IBIS linear group.
\end{prop}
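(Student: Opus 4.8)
The plan is to first put $G$ into a normal form. Write the elements of the base group $H\times H$ of $X$ as pairs $(a,b)$ acting on $V=W\oplus W$ by $(x,y)\mapsto(ax,by)$, let $\tau=(1,2)$ be the involution swapping the two coordinates, and let $\mu\colon H\times H\to H$, $(a,b)\mapsto ab$, whose kernel is exactly the anti-diagonal $A=\{(h,h^{-1})\mid h\in H\}$ occurring in hypothesis (1). Since $G$ contains $\tau$ and $A$, the subgroup $D:=G\cap(H\times H)$ contains $A$ and is $\tau$-invariant; hence $D=\mu^{-1}(L)$ for $L:=\mu(D)\le H$, and $G=D\rtimes\langle\tau\rangle$. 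Hypothesis (2) gives $(k,1)\in D$ with $k\ne1$, so $L\ne1$. Thus I may assume $G=\mu^{-1}(L)\rtimes\langle\tau\rangle$ with $1\ne L\le H$; conversely every such group satisfies (1) and (2). Here $|D|=|L|\,|H|$, and the image of $D$ in its first-coordinate action is all of $H$.

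Next I would compute the point stabilizers, distinguishing the generic points (both coordinates nonzero) from the points lying on a block $V_1=W\oplus0$ or $V_2=0\oplus W$. For $v=(x,y)$ with $x,y\ne0$ one checks that $D_v=1$ and that exactly one element of the coset $D\tau$ fixes $v$, namely the involution $r_v=(x/y,\,y/x)\tau$; hence $G_v=\langle r_v\rangle$ has order $2$. For a block point, say $v=(x,0)$ with $x\ne0$, no element of $D\tau$ fixes $v$, while $(a,b)\in D$ fixes $v$ iff $a=1$; thus $G_v=\{(1,b)\mid b\in L\}\cong L$, which fixes $V_1$ pointwise and acts on $V_2\cong F$ by the fixed-point-free scalar action of $L\le F^{*}$. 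In particular no point-stabilizer is trivial, so $b(G)\ge2$. Since $r_v$ fixes a second generic point $v'$ iff $v'$ is proportional to $v$, any two non-proportional generic points form a base; as $|F|\ge3$ (the case $|F|=2$ is excluded by $k\ne1$) such a pair exists, so $b(G)=2$.

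For the IBIS property I use that, with $b(G)=2$, the group $G$ is IBIS if and only if no irredundant base has length $\ge3$, equivalently $(G_v)_w\in\{1,G_v\}$ for all nonzero $v,w$. For a generic $v$ this is automatic, because $|G_v|=2$ is prime and so $G_v$ has no proper non-trivial subgroup. For a block point $v$ the group $G_v\cong L$ fixes one block pointwise and acts freely on the nonzero vectors of the other, so the stabilizer in $G_v$ of any $w$ is either all of $G_v$ (when $w$ lies in the fixed block) or trivial. Hence the criterion holds in every case, and $G$ is IBIS with $b(G)=2$.

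It remains to prove irreducibility, which I expect to be the only real obstacle, and the nilpotency addendum. With $p=\mathrm{char}\,F$, each block $V_i$ is an irreducible $\mathbb{F}_p[D]$-module, since $D$ acts on $V_i\cong F$ through all of $F^{*}$ (any additive subgroup of $F$ stable under $F^{*}$ is $0$ or $F$). If $0\ne U\subsetneq V$ were $G$-invariant, then $U\cap V_i$ and the projections $\pi_i(U)$ are $D$-submodules of $V_i$; as $\tau$ swaps the blocks, $U$ can contain neither block, forcing $U\cap V_1=U\cap V_2=0$ and $\pi_i(U)=V_i$. Thus $U=\{(x,f(x))\mid x\in F\}$ is the graph of an additive bijection $f$ with $f(ax)=b\,f(x)$ for all $(a,b)\in D$. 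Applied to $A$ this gives $f(h)=ch^{-1}$ on $F^{*}$ with $c=f(1)\ne0$; applied to $(k,1)\in D$ it gives $f(k)=f(1)$, whence $ck^{-1}=c$ and $k=1$, contradicting $k\ne1$. This is precisely where hypothesis (2) is indispensable: from $A$ alone the twisted diagonal cannot be excluded (e.g. when $|F|=3$ the map $f(h)=ch^{-1}$ is additive). Therefore $G$ is irreducible. Finally, if $H$ is a $2$-group then $|G|=2\,|L|\,|H|$ is a power of $2$, so $G$ is a $2$-group and hence nilpotent.
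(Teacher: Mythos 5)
Your proof is correct, and its core coincides with the paper's: both proofs rest on an explicit computation of all point stabilizers — for a point on a block the stabilizer is the subgroup you call $\{(1,b)\mid b\in L\}$ (the paper's $A$ and $B$), for a generic point it is the order-$2$ group generated by the unique swapping involution $r_v$, and any two stabilizers are equal or intersect trivially, which together with nontriviality of all stabilizers and $b(G)=2$ yields IBIS. You go beyond the paper in two respects. First, the normal form $G=\mu^{-1}(L)\rtimes\langle\tau\rangle$, with $L=\mu(G\cap(H\times H))$ the image under the product map: the paper never identifies $G\cap(H\times H)$ as a full preimage (it only works with the unspecified intersections $\{(h,1)\mid h\in H\}\cap G$ and $\{(1,h)\mid h\in H\}\cap G$), so your description of the block stabilizers is sharper. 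Second, and more substantially, you prove irreducibility: the paper's proof consists solely of the stabilizer computation and never addresses the claim that $G$ is irreducible (nor the nilpotency addendum). Your argument — a proper nonzero invariant subspace must meet each block trivially and project onto each, hence is the graph of an additive bijection $f$ satisfying $f(ax)=b\,f(x)$ for $(a,b)\in G\cap(H\times H)$; hypothesis (1) forces $f(h)=ch^{-1}$ on $F^{*}$, and hypothesis (2) then gives $k=1$, a contradiction — fills this gap, and your observation that for $|F|=3$ the diagonal is invariant under $\langle(1,2),(h,h^{-1})\mid h\in H\rangle$ shows the gap is not vacuous: irreducibility genuinely requires hypothesis (2). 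So your proposal is not only correct but strictly more complete than the paper's own proof of the proposition as stated.
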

Now we may state our result on nilpotent irreducible IBIS linear groups. 

\begin{thm}\label{ibisnilp}
	If $G$ is a nilpotent IBIS irreducible subgroup of $\GL(V),$ then one of the following occurs:
	\begin{enumerate}
		\item $G$ acts fixed-point-freely on $V.$
		\item $G$ is as in Proposition \ref{uno}.
		\item $G$ is as in Proposition \ref{due}.
	\end{enumerate}
\end{thm}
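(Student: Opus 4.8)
The plan is to split on quasi-primitivity, invoking Theorem \ref{qpri} when $G$ is quasi-primitive and running a Clifford/imprimitivity analysis otherwise. So suppose first that $G$ is quasi-primitive. Since $G$ is soluble, irreducible and IBIS, Theorem \ref{qpri} applies, so $G$ is metacyclic, a Frobenius complement, or one of the $39$ groups of Table \ref{table1}. A Frobenius complement acts fixed-point-freely, which is case (1); for the exceptional groups I would simply read off from Table \ref{table1} which are nilpotent and check by inspection that each such group is fixed-point-free (case (1)) or already listed in Proposition \ref{uno} (case (2)). The metacyclic subcase is the substantive one. A metacyclic quasi-primitive irreducible group is semilinear, so $G\le\Gamma(q^{n})$ and, after conjugation, $|G/(G\cap N)|=n$, a generator of the cyclic quotient acting on the cyclic group $G\cap N$ by $x\mapsto x^{q}$. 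Irreducibility forces $G\cap N\ne1$ and this action to be nontrivial; but a nontrivial action of a cyclic group of odd prime order on a cyclic group produces a non-nilpotent Frobenius section, so nilpotency forces $n=2$ with the involution acting essentially by inversion. The Sylow $2$-subgroup of $G$ is then dihedral or semidihedral, and matching the irreducibility and IBIS constraints of Corollary \ref{corodd} against this $2$-group yields exactly the groups of Proposition \ref{uno}, the Mersenne condition $r=2^{b}-1$ being precisely what makes the semidihedral (resp.\ dihedral) group act irreducibly and be IBIS. This is case (2).

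Now suppose $G$ is not quasi-primitive. Then a non-homogeneous normal subgroup exhibits $G$ as an imprimitive linear group, so $V=V_{1}\oplus\cdots\oplus V_{m}$ with $G$ permuting the blocks transitively and $m\ge2$. Choosing an imprimitivity system with the number of blocks minimal, the induced permutation action of $G$ on $\{V_{1},\dots,V_{m}\}$ is primitive; being a quotient of $G$ it is also nilpotent, and a primitive nilpotent permutation group is regular of prime order. Hence $\bar G:=G/K\cong C_{p}$ with $m=p$, where $K$ is the block kernel, and the setwise stabiliser $\mathrm{Stab}_{G}(V_{i})=K$ for every $i$. If $b(G)=1$ then $G$ is fixed-point-free and we are in case (1), so from now on I assume $b(G)\ge2$.

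The key step is to force $p=2$ and then to identify $G$ with the groups of Proposition \ref{due}. For $p=2$ I would compare two kinds of irredundant bases. If $0\ne u\in V_{1}$ then $G_{u}\le K$, so choosing a single-block vector first already eliminates every block-permuting element in one step; by contrast a ``diagonal'' vector $w=\sum_{i}g^{i}u$ fixed by a generator $g$ of $C_{p}$ has $g\in G_{w}$, so the reduction of $\bar G$ is postponed. Building stationary chains from these two starting vectors yields irredundant bases of different lengths as soon as $p\ge3$ (equivalently, one produces a base exceeding the bound $b(G)\le3$ of \cite{as}), contradicting the IBIS hypothesis. Hence $p=2$ and $V=V_{1}\oplus V_{2}$ with a single involution swapping the blocks. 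It remains to pin down the action of $K$: using the IBIS condition in the form that every nontrivial element of a point stabiliser $G_{v}$ shares the fixed space of $G_{v}$, I would show that $K$ acts on each $V_{i}$ as a group of scalars, so each $V_{i}\cong W$ is an irreducible module on which $K$ induces a Singer cycle $H=F^{*}$. The swap, together with the diagonal subgroup $\langle(h,h^{-1})\rangle$ and a nontrivial element $(k,1)$, then reproduces conditions (1)--(2) of Proposition \ref{due}, placing $G$ in case (3).

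The main obstacle is this last paragraph: converting the global IBIS property into the two local structural facts in the imprimitive case, namely that there are exactly two blocks and that the block kernel acts by scalars. The reduction to a primitive, hence cyclic of prime order, block action is routine, but excluding $p\ge3$ and, above all, ruling out a merely homogeneous (rather than scalar/irreducible) action of $K$ on each block require a careful comparison of irredundant bases built from mixed-support and single-block vectors. This is where the bound $b(G)\le3$ and the fixed-space reformulation of the IBIS condition do the real work, and it is the step I expect to be the most delicate.
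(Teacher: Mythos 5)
Your quasi-primitive half is a legitimate alternative route to the paper's (the paper dispatches the primitive case in one line via Hargraves' classification \cite[Theorem 2.4]{regularorbitnil} of nilpotent primitive groups without regular orbits, rather than going through Theorem \ref{qpri} and the metacyclic machinery), but as written it contains an error: the claim that a nontrivial action of a cyclic group of odd prime order on a cyclic group ``produces a non-nilpotent Frobenius section'' is false. The group $C_9\rtimes C_3$ with an action of order $3$ is nilpotent. Nilpotency of $G=(G\cap N)\langle\phi b\rangle$ only forces the generator to act trivially on the $s'$-part of $G\cap N$ for each prime $s$ dividing the order of its image; to exclude odd $n$ you must additionally exploit irreducibility (the action $x\mapsto x^{q}$ must be faithful enough on $G\cap N$ for $G$ to be irreducible), and that argument is missing.

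The genuine gap, however, is in the imprimitive case, which is the bulk of the paper's proof, and you essentially concede it: both decisive steps --- (i) forcing the number of blocks to be $2$, and (ii) showing the block kernel acts on each block as the multiplicative group of a field --- are only announced (``I would show\dots''). Step (ii) is not merely delicate; it is false a priori without a substantial argument. The image $H$ of the block stabilizer on a block inherits the IBIS property (lifting an irredundant base of $H$ on $W\setminus\{0\}$ to one of $G$), and by Halasi--Podoski (Theorem \ref{alpiudue}) $b(H)\le 2$; when $b(H)=2$ the group $H$ has no regular orbit, so it could a priori be the semidihedral group of Proposition \ref{uno} over a Mersenne prime, in which case the kernel does \emph{not} act by scalars. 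Eliminating exactly this configuration is what most of the paper's proof does (its case (b)): it uses Proposition \ref{trans} to get transitivity of $H$ on $W\setminus\{0\}$, Hargraves again to force $H=\Gamma(r^2)$, and a final computation with the subdirect structure $X=\{(h,h^{\alpha})\}$ showing $\Gamma(r^2)$ would have to be abelian --- a contradiction. Your sketch never confronts this case. Relatedly, your plan to derive contradictions from Seress' bound $b(G)\le 3$ \cite{as} is too weak: the paper's IBIS manipulations rest on the sharper nilpotent bound $b(G)\le 2$ of Theorem \ref{alpiudue}, which yields the two facts used at every turn, namely that $G_v\neq 1$ for all $0\neq v\in V$ and that stabilizers of distinct nonzero vectors are either equal or intersect trivially.
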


In Section \ref{odd} we consider the case of IBIS linear groups of odd order. \begin{thm}\label{ibisodd}
	Let $G$ be an IBIS irreducible subgroup of $\GL(V)$. If $|G|$ is odd, then either $G$ acts fixed-point-freely on $V$ or $G$ is a primitive metacyclic group. 
\end{thm}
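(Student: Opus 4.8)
The plan is to split the analysis into the quasi-primitive and the imprimitive cases, after a preliminary normalisation. Since $|G|$ is odd, $G$ is soluble by the Feit--Thompson theorem, so $b(G)\le 3$ by the main result of \cite{as}. If $b(G)=1$ then $G$ has a regular orbit on $V\setminus\{0\}$, and as recalled in the Introduction an IBIS group with a regular orbit acts fixed-point-freely; this is the first alternative in the statement. I would therefore assume $b(G)\ge 2$ from now on and prove that $G$ is primitive metacyclic, which in particular amounts to ruling out the imprimitive case.

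\textbf{Quasi-primitive case.} Here I would invoke Theorem \ref{qpri}: $G$ is metacyclic, a Frobenius complement, or one of the $39$ groups of Table \ref{table1}. A Frobenius complement acts fixed-point-freely and so has $b(G)=1$, contrary to our assumption (and in any case this is the first alternative). The $39$ exceptional groups are eliminated by a direct inspection of Table \ref{table1}, since each of them has even order. This leaves the metacyclic groups; by the structure theory of soluble quasi-primitive linear groups these are primitive (they are exactly the groups analysed in Theorem \ref{meta}), so $G$ is primitive metacyclic, as desired.

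\textbf{Imprimitive case.} This is the crux, and the absence of a reduction from the imprimitive to the primitive case (noted in the Introduction) is what makes it delicate. Fix a system of imprimitivity $V=V_1\oplus\cdots\oplus V_k$ permuted transitively by $G$, with $k\ge 2$; since a group of odd order has no transitive action on two points, in fact $k\ge 3$. The tool I would use is a fixed-space reformulation of the IBIS property. Writing $C_V(h)=\{v\in V:hv=v\}$ for the fixed-point space of $h$, a short argument shows that $G$ is IBIS with $b(G)=2$ if and only if $G$ has no regular orbit and, for all $1\ne h,g\in G$, the spaces $C_V(h)$ and $C_V(g)$ either coincide or meet only in $0$. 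Granting this criterion, I would derive a contradiction from imprimitivity: letting $N$ be the kernel of the action of $G$ on the set of blocks and taking (when $N$ permits) an element $h\in N$ acting nontrivially only on $V_1$, together with a conjugate $h^x$ (for $x\in G$ moving $V_1$ to $V_2$) acting nontrivially only on $V_2$, the spaces $C_V(h)$ and $C_V(h^x)$ both contain $V_3\oplus\cdots\oplus V_k\ne 0$ — this is exactly where $k\ge 3$ enters — yet are distinct, violating the criterion. Hence $b(G)=2$ cannot occur for an imprimitive $G$.

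The step I expect to be the main obstacle is twofold. First, the argument just sketched needs the block kernel $N$ to contain elements of small support; when it does not, one must instead play off two block-permuting elements of odd prime order whose fixed spaces share a nonzero vector, and rule out the remaining degenerate configurations by a finer analysis that ultimately forces $G$ to be primitive or fixed-point-free. Secondly, the case $b(G)=3$ must be treated, where being IBIS is the more intricate requirement that every irredundant chain of stabilizers has length exactly three; here I would use the matroid structure guaranteed by \cite{ibis}, whose rank-one flats are the fixed spaces $C_V(G_v)$, and show that an imprimitive block system with $k\ge 3$ forces a failure of the exchange property of a geometric lattice. Throughout, the odd-order hypothesis is what excludes the involutory block swap underlying the genuinely imprimitive IBIS examples of Proposition \ref{due} and the $2$-groups of Proposition \ref{uno}; it is precisely this swap that is unavailable when $|G|$ is odd, and its absence is what forces an imprimitive IBIS group of odd order to be fixed-point-free.
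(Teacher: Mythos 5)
Your reduction skeleton (eliminate $b(G)=1$, kill imprimitivity, then apply Theorem \ref{qpri}) matches the paper's, and your fixed-space criterion for ``IBIS with $b(G)=2$'' is correct and equivalent to the stabilizer condition the paper uses. But the two points you defer as ``obstacles'' are genuine gaps, and at both of them the paper does something specific that your sketch lacks. First, there is no $b(G)=3$ case to treat: the paper invokes \cite[Theorem 1.3]{as}, which gives $b(G)\le 2$ for irreducible soluble linear groups of \emph{odd order}, rather than the general soluble bound $b(G)\le 3$ that you quote. Your proposed treatment of $b(G)=3$ via the matroid of \cite{ibis} is only an intention, not an argument, so as written this case is simply open in your proof; citing the sharper bound would remove it altogether.

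Second, your imprimitive argument fails exactly where you admit it might: the block kernel $N$ of an irreducible imprimitive $G\le H\wr S_k$ need not contain any nontrivial element supported on a single block --- $G\cap H^k$ can perfectly well be a subdirect (e.g.\ diagonal) subgroup of $H^k$ --- and your fallback (``play off two block-permuting elements of odd prime order\dots'') is not carried out. The paper's argument never looks inside $N$ at all. It fixes $0\neq u\in W$, sets $\alpha=(u,0,\dots,0)$, $\beta_{i,w}=(0,\dots,0,w,0,\dots,0)$ and $\gamma_{i,w}=(u,0,\dots,0,w,0,\dots,0)$, and notes that any $g\in G_{\gamma_{i,w}}$ induces a block permutation $\sigma$ with $\{1,i\}\sigma=\{1,i\}$; since $|\sigma|$ is odd, $\sigma$ must fix $1$ and $i$, whence $G_{\gamma_{i,w}}=G_\alpha\cap G_{\beta_{i,w}}$, and this intersection is nontrivial because $b(G)=2$. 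The IBIS property then forces $G_\alpha=G_{\beta_{i,w}}$ for all $i\ge 2$ and all $w\neq 0$, which pins down $G_\alpha=\{(x,1,\dots,1)\mid x\in X\}$ for some $1\neq X\le H$; conjugating $G_\alpha$ by an element $z=(y_1,\dots,y_n)\tau\in G$ with $1\tau=2$ produces a subgroup supported on the second block which must again equal $G_\alpha$ (it is the stabilizer of $\beta_{2,u^{y_1}}$), a contradiction. Note that this uses oddness only through $\sigma$, needs no hypothesis on the structure of $N$, and works already for $k=2$, so your preliminary reduction to $k\ge 3$ is unnecessary. A smaller point: your claim that soluble quasi-primitive metacyclic groups ``are primitive\dots by the structure theory'' is asserted without proof; it is doing real work in your quasi-primitive branch, whereas in the paper primitivity comes for free because imprimitivity has already been excluded.
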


\begin{cor}\label{ibisoddpr}
Assume that $G$ is a soluble primitive permutation group of degree $n$ and odd order. If $G$ is an IBIS permutation group, then either $G$ is a Frobenius group or $G\leq \agaL(1,n).$
\end{cor}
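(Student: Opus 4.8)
The plan is to reduce, via the O'Nan--Scott theorem, to the affine case and then apply Theorem~\ref{ibisodd} to a point stabiliser. First I would argue that $G$ must be of affine type. Since $G$ is a primitive IBIS group, Theorem~1.1 of~\cite{lmm} (quoted above) forces $G$ to be affine, almost simple, or of diagonal type; the almost simple groups have nonabelian simple socle, and the diagonal examples listed there have socle $\PSL(2,2^f)\times\PSL(2,2^f)$, so both are insoluble (and of even order). Hence the hypothesis that $G$ is soluble of odd order leaves only the affine type, so that $G=V\rtimes X$, where $V$ is elementary abelian of order $n=p^d$ (in particular $p$ is odd) and $X=G_0\le\GL(V)$ acts irreducibly on $V$.

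Next I would transfer the IBIS property to the stabiliser. As recalled in the Introduction, $G=V\rtimes X$ is IBIS as a permutation group on $V$ if and only if $X$ is an IBIS subgroup of $\GL(V)$ in its action on $V\setminus\{0\}$. Since $|G|=|V|\,|X|$ is odd, $X$ is an irreducible IBIS linear group of odd order, and Theorem~\ref{ibisodd} gives two possibilities: either $X$ acts fixed-point-freely on $V$, or $X$ is a primitive metacyclic linear group. In the first case $X$ is a Frobenius complement in $V\rtimes X=G$, so $G$ is a Frobenius group and we are done. In the second case I would invoke the structural fact that a primitive metacyclic irreducible linear group $X\le\GL(d,p)$ is conjugate in $\GL(d,p)$ to a subgroup of the semilinear group $\GaL(1,p^d)=\Gamma(p^d)$; granting this, after a suitable identification of the points of $V$ we get $X\le\Gamma(p^d)$, whence $G=V\rtimes X\le V\rtimes\GaL(1,p^d)=\agaL(1,p^d)=\agaL(1,n)$, as required.

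The main obstacle is thus the structural claim that a primitive metacyclic $X$ is semilinear, and I would prove it from the classical description of primitive soluble linear groups. There the Fitting subgroup has the shape $\fit(X)=Z\times R$, with $Z$ cyclic and $R$ a (possibly trivial) central product of symplectic-type $r$-groups, while $X/\fit(X)$ embeds in a corresponding product of symplectic groups acting on $R/Z(R)$, together with field automorphisms. The crucial point is that metacyclicity forbids a nontrivial $R$. Indeed, if an extraspecial-type factor $E\trianglelefteq X$ of order $r^{1+2k}$ were present, primitivity would force the relevant symplectic part of $X$ to act irreducibly on $E/Z(E)$; then the commutators of these automorphisms with $E$ span $E/Z(E)$, giving $E\le X'$. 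But a metacyclic group has a cyclic normal subgroup $C$ with $X/C$ cyclic and hence abelian, so $X'\le C$ is cyclic, contradicting $E\le X'$ with $E$ nonabelian. Therefore $R=1$.

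With $R=1$ the structure theory makes $\fit(X)=Z$ cyclic and acting with multiplicity one (the multiplicity equals the degree of a faithful representation of $R$), so by primitivity $Z$ acts irreducibly and is a subgroup of a Singer cycle $\GF(p^d)^{\ast}$ of $\GL(d,p)$; consequently $X$ normalises this Singer cycle and lies in its normaliser $\GaL(1,p^d)$. Assembling the affine reduction with the two cases then yields Corollary~\ref{ibisoddpr}: $G$ is either a Frobenius group or is contained in $\agaL(1,n)$. I expect the only delicate point to be the verification that the symplectic-type part of $\fit(X)$ really is incompatible with metacyclicity, which is exactly the commutator argument sketched above.
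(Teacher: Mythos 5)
Your route is genuinely different from the paper's, and unfortunately the place where it differs is where it breaks. The paper's own proof is three lines: since $G$ is soluble and primitive, $V=\soc(G)$ is elementary abelian of order $n$ and a point stabiliser $G_\omega$ is an odd-order irreducible IBIS subgroup of $\GL(V)$; if $G_\omega$ acts fixed-point-freely then $G$ is Frobenius, and otherwise $G_\omega$ has no regular orbit (a linear group with $b=1$ is IBIS only if it is fixed-point-free), at which point \cite[Theorem 2.12]{as} is quoted to conclude $G_\omega\le\GaL(1,n)$ and hence $G\le\agaL(1,n)$. In particular the paper never passes through Theorem \ref{ibisodd} or any analysis of metacyclic groups. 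Your first two steps are correct (though the reduction to affine type needs only solubility of $G$, not the IBIS classification of \cite{lmm}), but your third step --- the claim that a primitive metacyclic irreducible linear group lies in $\GaL(1,p^d)$, proved via the symplectic-type structure theory --- contains a genuine gap.

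The assertion that ``metacyclicity forbids a nontrivial $R$'' is false, and the argument offered for it does not hold up. Primitivity does not force $X$ to act irreducibly, or even without nonzero fixed points, on $E/Z(E)$: the structure theory of quasi-primitive soluble linear groups gives only a \emph{faithful, completely reducible} action of $X/\fit(X)$ there. Concretely, $Q_8\le\GL(2,3)$ is a primitive metacyclic irreducible linear group whose Fitting subgroup is $Q_8$ itself (nonabelian of symplectic type), and $\GaL(1,9)$, a semidihedral group of order $16$, is another; in these examples $X$ acts trivially on $E/Z(E)$, so your commutator argument never starts and $E\not\le X'$ (these groups lie inside $\GaL(1,9)$, so they contradict your argument, not your conclusion). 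Restricting to odd order --- which is all you need, but which your step three never invokes --- does not rescue the argument as written either: for odd $r$ the extraspecial group of order $r^3$ and exponent $r^2$ \emph{is} metacyclic, so to exclude it from $\fit(X)$ you must actually prove fixed-point-freeness of the induced action on $E/Z(E)$, and that requires combining faithfulness and complete reducibility with the facts that every normal abelian subgroup of a quasi-primitive group is cyclic and that the action on the fixed lines determines the action on $Z(E)$; only after that does odd order yield a contradiction. Two further inaccuracies: $\fit(X)$ is a central product $ZR$, not a direct product; and once $\fit(X)=Z$ is cyclic, the claim that $Z$ acts with multiplicity one is not a degree count --- it needs a Galois-descent (Hilbert 90) argument showing that a semilinear group whose linear part consists of scalars always fixes a line, whence irreducibility forces $Z$ to be irreducible. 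The statement you want is true in the odd-order case, but the clean repair is exactly what the paper does: cite \cite[Theorem 2.12]{as}, which already packages ``odd order and no regular orbit implies semilinear'' and makes the excursion through metacyclic structure theory unnecessary.
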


\medskip
\noindent {\bf Acknowledgements.} {\sl The authors are grateful to Pablo Spiga for the computations performed for finding exceptional IBIS groups using MAGMA. 

\noindent The second author is grateful for the research grant from the University of Padova.}

\section{The 39 exceptions in Theorem \ref{qpri}}

In Table~\ref{table1} we collect the quasi-primitive solvable groups that do not have a regular orbit and that are IBIS. Here we give some information in how to read Table~\ref{table1}. Essentially, we follow the notation in Table~4.1 in the work of Holt and Yang \cite{HY}. We have divided Table~\ref{table1} in 9 columns. The first column is the same ``No. column" as in~\cite{HY} and simply denotes the line numbering (in our case restricted to the cases where some IBIS group does exist) for the files in the supplementary matherial in~\cite{HY}. The ${\bf e}$, ${\bf p}$, ${\bf d}$, ${\bf a}$, ${\bf b}$ have the same meaning as the respective columns in~\cite[Table~4.1]{HY} and this meaning is explained in~\cite[page 141]{HY}. The {\bf Nr.} indicates the number of groups that are IBIS in the corresponding line and the {\bf Grp. ordrs.} indicates the respective orders. In the {\bf Cmmts.} column we are giving the  position in the corresponding file of the IBIS groups: the first two rows also indicate whether the groups are of type $E^+$ or $E^-$, as indicated in~\cite{HY}.

\begin{table}[!ht]
	\begin{tabular}{c|c|c|c|c|c|c|c|c|c}
		{\bf No.}&${\bf e}$&${\bf p}$&${\bf d}$&${\bf a}$&${\bf b}$&{\bf Nr. }&{\bf Grp. ordrs.} &{\bf Cmmnts}&{\bf Bs. Sz.}\\
		19&4&3&4&1&1&4&$640$, $192$, $320$, $160$& 2,3,5, 6 $E^-$&2, 2, 2, 2\\
		19&4&3&4&1&1&4&$1152$, $1152$, $576$, $192$& 2,4,5, 14 $E^+$&3, 3, 3, 2\\
		48&3&2&6&2&1&1&$648$& 3&3\\
		62&2&3&2&1&1&2&$48$, $24$& 1, 2&2, 2\\
		63&2&5&2&1&1&2&$96$, $48$& 1, 2&2, 2\\
		64&2&7&2&1&1&2&$144$, $72$& 1, 2&2,2\\
		65&2&3&4&2&1&7&$48$, $96$, $96$, $96$&&2, 2, 2, 2\\
		&&&&&&&$96$, $192$, $192$& 3,4,5,7,8,10, 11&2, 2, 2\\
		66&2&11&2&1&1&2&$240$, $48$& 1, 2&2, 2\\
		67&2&13&2&1&1&2&$288$, $144$& 1, 2&2, 2\\
		68&2&17&2&1&1&3&$384$, $96$, $192$& 1, 2, 3&2, 2, 2\\
		69&2&19&2&1&1&2&$432$, $144$& 1, 2&2, 2\\
		71&2&5&4&2&1&6&$576$, $144$, $288$&&2, 2, 2\\
		&&&&&&&$288$, $288$, $576$& 2,5,8,11,12, 13&2, 2, 2\\
		72&2&3&6&3&1&2&$1872$, $936$& 1, 2&2, 2\\
	\end{tabular}
	\caption{Parameters of IBIS quasi-primitive solvable groups that do not have a regular orbit.}\label{table1}
\end{table}

The computational work to produce the table has been organized as follows:
\begin{enumerate}	
\item Exhaustive search of bases having size 2. All cases where small enough, so this step  was easy and only a matter of time.
\item For the groups having a base of size 2 (the most frequent sitation) we started a random search for bases having size 3. In all cases where this was successful, the group was not IBIS and hence it could be omitted from further analysis.
\item For the groups passing the previous step, if these groups are small, we checked extensively for bases having 3. If the groups are large, for each orbit
 we constructed the action of $G$ on this orbit and checked extensively for bases having size 3 in this action.
 \item Similarly, for the few cases where there is no base of size 2, we check the existence of bases of size 4.
\end{enumerate}

\section{Metacyclic groups}\label{metac}

\begin{lemma}\label{ibprim}
	Suppose that $G\leq \Gamma(q^n),$ with $n>1$ and $|G/G\cap N|=n.$ If $G$ is IBIS, then either $G$ is a Frobenius complement or, for every $v\in  
V\setminus\{0\},$ the stabilizer $G_v$ has order prime order.
\end{lemma}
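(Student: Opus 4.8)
The plan is to exploit two structural facts. First, because the Singer cycle $N$ acts regularly on $V\setminus\{0\}$, every point stabilizer $G_v$ meets $N$ trivially and hence embeds into $\Gamma(q^n)/N\cong\mathbb{Z}/n$, so each $G_v$ is cyclic of order dividing $n$. Second, an irredundant base whose first point is a fixed $v$ has length exactly $1$ plus the length of a chain of (joint) point-stabilizers running from $G_v$ down to $1$. The strategy is to show that, whenever $G_v$ is nontrivial, such chains can be realised of every length between $1$ and the maximal length, so the IBIS hypothesis pins that maximal length to $1$.

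First I would dispose of the Frobenius-complement alternative. If $G_v=1$ for some $v\neq 0$, then $(v)$ is an irredundant base of length $1$, so the IBIS property forces every irredundant base to have length $1$; were some other stabilizer $G_w$ nontrivial, then since $G$ is irreducible (and so fixes no nonzero vector) and $G_w$ acts faithfully, one could extend $(w)$ to an irredundant base of length at least $2$, a contradiction. Hence in this case all stabilizers are trivial and $G$ is a Frobenius complement. From now on I would assume every $G_v$ is nontrivial and prove that each has prime order.

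Fix $v$ and write $G_v=\langle h\rangle$ of order $m\mid n$, where $h\colon x\mapsto v^{1-q^{e}}x^{q^{e}}$ with $e=n/m$. For $w=uv$ with $u\in\GF(q^n)^{*}$ of degree $f$ over $\GF(q)$ (so $f\mid n$), a direct computation shows that $h^{k}$ fixes $w$ exactly when $u^{q^{ek}}=u$, i.e. when $f'\mid k$ where $f'=f/\gcd(f,e)$; writing $S_u:=G_{(v,w)}=G_v\cap G_w$, this gives $S_u=\langle h^{f'}\rangle$. A valuation count shows $f'\mid m$ and that, as $f$ ranges over the divisors of $n$, the value $f'$ ranges over \emph{all} divisors of $m$ (for a target $d\mid m$ choose $\nu_p(f)=\nu_p(e)+\nu_p(d)$ at each prime $p$, admissible since $\nu_p(d)\le\nu_p(m)$, and realise this $f$ as the degree of a suitable $u$). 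Consequently every subgroup of the cyclic group $G_v$ occurs as a stabilizer $S_u$. Taking $u$ of degree exactly $n$ yields $f'=m$ and $S_u=1$, so $(v,w)$ is already an irredundant base of length $2$; on the other hand, realising a maximal chain $G_v>C_1>\dots>C_r=1$ of subgroups, with $r$ equal to the number of prime factors of $m$ counted with multiplicity, produces an irredundant base $(v,w_1,\dots,w_r)$ of length $1+r$. The IBIS hypothesis then forces $1+r=2$, i.e. $m$ is prime.

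The main obstacle is the middle step—showing that every subgroup of $G_v$ occurs as a joint stabilizer—which reduces to the arithmetic claim that $f\mapsto f/\gcd(f,e)$ maps the divisors of $n=em$ onto the divisors of $m$; this I would verify prime by prime as above. Granting it, assembling a maximal chain is painless: a chosen subgroup $C_i=S_{u_i}$ contained in the running stabilizer intersects it to give exactly $C_i$, since in a cyclic group the intersection of nested realizable subgroups drops to no lower term. With both a direct jump to the identity (length $2$) and a maximal chain (length $1+r$) simultaneously available, the IBIS hypothesis yields $r=1$, so $|G_v|$ is prime, which together with the Frobenius-complement case completes the dichotomy.
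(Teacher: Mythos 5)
Your proposal is correct and follows essentially the same route as the paper's proof: both identify $G_v$ as a cyclic subgroup of the point stabilizer $\Gamma_v$ in the full semilinear group, compute joint stabilizers $G_v\cap G_w$ in terms of the field degree of the element relating $w$ to $v$, and contradict the IBIS hypothesis by exhibiting irredundant bases of length $2$ and of length at least $3$ whenever $|G_v|$ is composite. The differences are only cosmetic: you translate multiplicatively (writing $w=uv$) where the paper conjugates so that $v=1$, and your valuation argument proves the stronger fact that \emph{every} subgroup of $G_v$ occurs as a joint stabilizer, whereas the paper gets by with a single intermediate element (of order $q^{rt}-1$) together with a generator of the Singer cycle.
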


\begin{proof}
Let $\Gamma=\Gamma(q^n).$ We identify the elements of $N$ and the elements of $W:=V\setminus \{0\}$ with the non-zero elements of the field $\GF(q^n).$ 
Notice that, for every $a\in W,$ we have $\Gamma_a=\langle \phi a^{1-q}\rangle.$
Moreover, if $m$ divides $n,$ then $(\phi a^{1-q})^m=\phi^m a^{1-q^m}.$
Suppose that $G$ is  IBIS, but not a Frobenius complement. 
Let $a\in W.$ We want to prove that the stabilizer $G_a$ of $a$ in $G$ has prime order. Since  $G_a$  coincides with the stabilizer of 1 in the subgroup $G^{a^{-1}}$ of $\Gamma$ obtained by conjugation with $a^{-1},$ we may assume
 $a=1.$ Hence there exists a proper divisor $r$ of $n$ with $G_1=\langle \phi^r \rangle$.  Let $s=|G_1|=n/r.$ Notice that if $N=\langle c \rangle,$ then
$\Gamma_c \cap \langle \phi \rangle=1.$ So in particular $G_c \cap G_1=1,$ and $(1,c)$ is an irredundant base. Suppose that $s$ is not a prime, and write $s=tu,$ with $t,u >1.$ Then $n=rs=rtu$, so $q^{rt}-1$ divides $q^n-1$ and if $d \in N$ has order $q^{rt}-1,$ then
$\phi^{rt}$ is a non-trivial element of $G_d$, while $\phi^{rt}\notin G_c.$ But then also $(1,d,c)$ is an irredundant base, in contradiction with the fact that $G$ is IBIS.	
\end{proof}

\begin{lemma}\label{ibse}
	Suppose that $G\leq \Gamma(q^n),$ with $n>1$ and $|G/G\cap N|=n.$ If $G$ is IBIS, then either $G$ is a Frobenius complement or there exists a prime $s$ such that $|G_v|=s$ for every $v\in  
	V\setminus\{0\}.$ 
\end{lemma}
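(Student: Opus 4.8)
The plan is to derive the statement from Lemma~\ref{ibprim} by ruling out the coexistence of two distinct prime stabilizer orders. Assume $G$ is IBIS and not a Frobenius complement, so that by Lemma~\ref{ibprim} every stabilizer $G_v$ has prime order. Since $|G/(G\cap N)|=n=|\Gamma/N|$, the group $G$ contains an element $\phi b$ mapping onto the Frobenius, with $b\in N$; put $M=G\cap N$. Suppose, for contradiction, that there are points $a_1,a_2\in W$ with $|G_{a_1}|=s$ and $|G_{a_2}|=s'$ for two distinct primes $s,s'$ (both dividing $n$, as each stabilizer order divides $n$).

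First I would record the arithmetic of stabilizers. As in the proof of Lemma~\ref{ibprim}, for $a\in W$ one has $\Gamma_a=\langle \phi a^{1-q}\rangle$ cyclic of order $n$; writing $t_a=a^{q-1}b$ and $e_r=(q^r-1)/(q-1)$, a short computation (using that the elements of $G$ with Frobenius part $\phi^m$ have scalar part in $b^{e_m}M$) gives $(\phi a^{1-q})^m\in G$ iff $t_a^{e_m}\in M$. Passing to the cyclic quotient $\overline N=N/M$ and writing $\overline t_a$ for the image of $t_a$, this reads $|G_a|=n/r(a)$ with $r(a)=\min\{r\mid n:\overline t_a^{\,e_r}=1\}$. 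For a prime $p\mid n$ set $A_p=\{x\in\overline N:x^{e_{n/p}}=1\}$, a subgroup of $\overline N$; then $p\mid |G_a|$ precisely when $\overline t_a\in A_p$, and if $\overline t_a$ lies in $A_s\cap A_{s'}$ then $ss'\mid |G_a|$, so $|G_a|$ is composite. In this language the two witnesses give $\overline t_{a_1}\in A_s$ and $\overline t_{a_2}\in A_{s'}$, and—crucially—both $\overline t_{a_1},\overline t_{a_2}$ lie in the same coset $C=\overline b\,\overline{N^{q-1}}$ of the subgroup $\overline{N^{q-1}}$, since $t_a$ ranges over $bN^{q-1}$ as $a$ ranges over $W$.

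The heart of the argument is then an elementary fact about cosets in a finite cyclic group: if a coset $C$ of a subgroup $B$ meets two subgroups $A,A'$ of coprime order, then $C$ meets $A\cap A'$. I would prove this directly: picking $\alpha\in C\cap A$ and $\alpha'\in C\cap A'$ one has $\alpha-\alpha'\in B$, and a Bezout/divisibility computation using $\gcd(|A|,|A'|)=1$ forces $\alpha\in B$, whence $0\in C$. To apply it, I identify $A_s\cap A_{s'}=\{x:x^{e_{n/(ss')}}=1\}=:A$ via $\gcd(e_{n/s},e_{n/s'})=e_{\gcd(n/s,n/s')}=e_{n/(ss')}$; then in the quotient $\overline N/A$ the images of $A_s$ and $A_{s'}$ have coprime orders and trivial intersection. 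Applying the cyclic lemma in this quotient to the image of $C$ (which meets both images through $\overline t_{a_1},\overline t_{a_2}$) yields an element $\overline t_{a_3}\in C\cap A$. Pulling back to $W$ produces a point $a_3$ with $ss'\mid |G_{a_3}|$, i.e.\ with composite stabilizer, contradicting Lemma~\ref{ibprim}; hence a single prime $s$ occurs.

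I expect the main obstacle to be the reformulation itself: the correct identification of $A_s\cap A_{s'}$ and the verification of the coprimality required for the quotient step. Once the question is recast as the coset/coprime-subgroup lemma inside the cyclic group $\overline N$, the contradiction is immediate. It is worth stressing that no new irredundant base needs to be exhibited—indeed, once every stabilizer has prime order, every irredundant base has length exactly $2$—so the entire force of the IBIS hypothesis is already captured by Lemma~\ref{ibprim}, and the remaining task is purely to show that two distinct prime orders would force a composite-order stabilizer.
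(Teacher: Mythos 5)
Your proposal is correct, but it takes a genuinely different technical route from the paper's. The paper argues directly inside the semilinear group: writing $G_1=\langle\phi^{r_1}\rangle$ and $G_a=\langle\phi^{r_2}a^{1-q^{r_2}}\rangle$ with $r_i=n/s_i$, it sets $d=\gcd(r_1,r_2)=xr_1+yr_2$ and checks by a one-line computation that $g_1^xg_2^y=\phi^d(a^u)^{1-q^d}$ lies in $G_{a^u}$ for a suitable $u$; this element has order $n/d$ in the cyclic group $\Gamma_{a^u}$, so $n/d$ divides $|G_{a^u}|$, which by Lemma~\ref{ibprim} is prime, forcing $d=r_1=r_2$ and hence $s_1=s_2$. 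You instead transfer all stabilizer data into the cyclic quotient $\overline{N}=N/(G\cap N)$: the identity $|G_a|=n/r(a)$ read off from $\overline{t}_a=\overline{a^{q-1}b}$, the subgroups $A_p$, the gcd identity $\gcd(e_m,e_{m'})=e_{\gcd(m,m')}$ (which is indeed valid, since $\gcd(q^m-1,q^{m'}-1)=q^{\gcd(m,m')}-1$ and one may divide through by $q-1$), and your coset lemma, whose Bezout proof is sound: $m'\alpha=m'(\alpha-\alpha')\in B$, $m\alpha=0$, so $\alpha=x(m\alpha)+y(m'\alpha)\in B$. The remaining steps also check out, including the quotient-by-$A$ device (needed because $|A_s|$ and $|A_{s'}|$ themselves need not be coprime, only their intersection is controlled) and the fact that every element of the coset $\overline{b}\,\overline{N^{q-1}}$ is realized as $\overline{t}_a$ for some $a\in W$, which is what lets you pull the abstract conclusion back to an actual point $a_3$. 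Both proofs share the same skeleton: IBIS enters only through Lemma~\ref{ibprim}, and the contradiction is a point whose stabilizer has composite order; but where the paper manufactures that point by multiplying two explicit stabilizer generators, you locate it abstractly by a coset-intersection argument in an abelian quotient. The paper's computation is shorter; your reformulation is heavier to set up, but the bookkeeping in terms of $\overline{t}_a$ and subgroups of $\overline{N}$ is essentially the same language the paper needs later anyway for Theorem~\ref{meta} (the conditions on $b^{(q^t-1)/(q-1)}$ modulo $N^{q^t-1}(N\cap G)$), so your framework would serve both results uniformly.
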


\begin{proof}
Suppose that $G$ is  IBIS, but not a Frobenius complement. By the previous lemma, $|G_1|=s_1$ for some prime $s_1.$ Let now $a \in V \setminus \{0\}$, and assume $|G_a|=s_2.$ Let $r_1=n/s_1$ and $r_2=n/s_2$. Notice that $g_1=\phi^{r_1}$ and $g_2=\phi^{r_2}a^{1-q^{r_2}}$ belong to $G.$ Let $d=(r_1,r_2)$. There exist $x,y \in \mathbb Z$ with $d=xr_1+yr_2.$ Hence, setting $t=1+q^{r_2}+\dots+q^{r_2(y-1)}$ and $u=t(1-q^{r_2})/(1-q^d),$ we have
$$g_1^xg_2^y=\phi^d a^{(1-q^{r_2})t}=\phi^d (a^u)^{1-q^d}\in G_{a^u}.$$
Hence $n/d$ divides $|G_{a^u}|$ and therefore, by Lemma \ref{ibprim}, $n/d$ is a prime. This is possible only if $d=r_1=r_2$, i.e. if $s_1=s_2.$
\end{proof}

\begin{proof}[Proof of Theorem \ref{meta}]
	The first part of the statement follows from Lemma \ref{ibse}.
	
	Assume that $G$ is an IBIS subgroup of $\Gamma=\Gamma(q^n)$ and that $\phi b \in G$ for some $b\in N.$ We identify the elements of $N$ and the elements of $W:=V\setminus \{0\}$ with the non-zero elements of the field $\GF(q^n)$ and we set $H=G\cap N.$ 
	Either $G$ is a  Frobenius complement or there exists a prime $s$ such that all the stabilizers in $G$ have order $s.$ We set $r=n$ in the first case, $r=n/s$ in the second. If $G$ is a Frobenius complement, then $N^{q^r-1}=N^{q^n-1}\in G.$ Otherwise $G_1=\langle \phi^r \rangle$ and therefore
	$\phi^r\in G=H\langle \phi b \rangle,$ which implies $b^{(q^r-1)/(q-1)}\in H.$
 Moreover, for every $a\in W,$ $G_a=\langle \phi^r a^{1-q^r}\rangle \leq G$. This implies in particular that $N^{q^r-1}\leq H.$
 Finally notice that in any case, if $t$ is a proper divisor of $r$ then, for every $a\in V\setminus \{0\},$ $\phi^t a^{1-q^t}\notin \Gamma_a \setminus G$: this means 
$$\phi^t a^{1-q^t}\neq (\phi b)^t= \phi^t b^{(q^t-1)/(q-1)} \mod H,$$ for every $a\in N$,
or equivalently $b^{(q^t-1)/(q-1)}\notin  N^{q^t-1}(G\cap N).$ 

Conversely, if $G=H\langle \phi b\rangle$ satisfies conditions (1), (2) and (3), then $|G_a|=n/r$ for every $a \in V\setminus \{0\}$, and therefore $G$ is IBIS.
\end{proof}


Consider for example the following subgroup of $\Gamma(q^4)$:
$G=\langle \phi a, a^{q+1} \rangle$, with $|a|=q^4-1.$ 
We have $N^{q^2-1}\leq G.$ Moreover, if $q$ is odd, then $a\notin N^{q-1}(G\cap N)=N^{q-1}N^{q+1}=\langle a^2\rangle.$ But then it follows from Theorem \ref{meta} that if $q$ is odd, then $G$ is an IBIS linear group of order $4(q^3-q^2+q-1),$ in which the stabilizer of every nonzero vector has order 2.





\section{Nilpotent irreducible IBIS linear groups} 

A result useful to investigate the IBIS irreducible linear groups $G$ with $b(G)=2$ is the following:

\begin{prop}\label{trans}
	Let $G$ be an IBIS irreducible subgroup of $\GL(V)$. If $b(G)=2$ and $V \cong W^n$ is an imprimitive decomposition for the action of $G$, then the stabilizer $H$ of $W$ in $G$ acts transitively on $W\setminus \{0\}.$
\end{prop}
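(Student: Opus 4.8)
The plan is to reformulate the IBIS hypothesis as a statement about fixed spaces and then play it off against the transitivity of $G$ on the blocks. Write $V=W_1\oplus\dots\oplus W_n$ for the imprimitive decomposition, with $W=W_1$ and $H=\mathrm{Stab}_G(W_1)$, and for a subset $X\subseteq G$ put $\mathrm{Fix}(X)=\{v\in V: xv=v\ \forall x\in X\}$. First I would record the two consequences of ``IBIS together with $b(G)=2$'': since there is no base of size $1$, every nonzero $v$ satisfies $G_v\ne 1$ (no regular orbit); and since every irredundant pair must already be a base, for every nonzero $v$ and every $1\ne g\in G_v$ one has $\mathrm{Fix}(g)=\mathrm{Fix}(G_v)$. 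In particular the spaces $\mathrm{Fix}(G_v)$ $(v\ne0)$ form a partition of $V\setminus\{0\}$: any two of them either coincide or meet only in $0$. I would also note three elementary facts: for $w\in W_1\setminus\{0\}$ the stabilizer $G_w$ lies in $H$; the $G$-orbit of $w$ meets $W_1$ exactly in its $H$-orbit; and $H$ acts irreducibly on $W_1$ (the standard property of a system of imprimitivity of an irreducible module).

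Assume for contradiction that $H$ is not transitive on $W_1\setminus\{0\}$, so there are $w_1,w_2\in W_1\setminus\{0\}$ in different $H$-orbits, hence in different $G$-orbits. The heart of the argument, and the step I expect to be the main obstacle, is to show that $F:=\mathrm{Fix}(G_{w_1})$ contains every block $W_i$ with $i\ge2$. Fix such an $i$ and any $g\in G$ with $gW_1=W_i$. The vector $w_1+gw_2$ is supported on the two blocks $W_1,W_i$, so any element of $G$ fixing it must permute $\{W_1,W_i\}$ among themselves; it cannot interchange them, because that would force $w_1\sim_G gw_2\sim_G w_2$, against our choice. Hence every element fixing $w_1+gw_2$ fixes $W_1$ and $W_i$ separately, so $G_{w_1+gw_2}=G_{w_1}\cap G_{gw_2}$, which is nontrivial as $G$ has no regular orbit; a nontrivial element of this intersection yields $\mathrm{Fix}(G_{w_1})=\mathrm{Fix}(G_{gw_2})$, i.e.\ $gw_2\in F$. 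Letting $g$ range over the coset $\{g: gW_1=W_i\}$ and using that the $H$-orbit of $w_2$ spans $W_1$ (irreducibility of $H$ on $W_1$), the vectors $gw_2$ span $W_i$, so $W_i\subseteq F$. Consequently $G_{w_1}$ fixes $\bigoplus_{i\ge2}W_i$ pointwise, hence preserves every block, and $F$ is block-graded with $F\cap W_i=W_i$ for $i\ge2$ and $F\cap W_1=:L$ a proper nonzero subspace of $W_1$ (proper because $G_{w_1}\ne1$ must act nontrivially on $W_1$).

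To finish I would transport this ``unbalanced'' member of the partition by a block-permuting element and contradict the partition property. Choose $\gamma\in G$ with $\gamma W_1=W_2$; then $\gamma F=\mathrm{Fix}(G_{\gamma w_1})$ is again a member of the partition, again block-graded, and a direct check of the block components gives $\gamma F\cap W_1=W_1$ and $\gamma F\cap W_2=\gamma L$, proper in $W_2$. Comparing $F$ and $\gamma F$ block by block yields $F\cap\gamma F\supseteq L\oplus\gamma L\oplus\bigoplus_{i\ge3}W_i\ne 0$. Since distinct members of the partition meet only in $0$, this forces $F=\gamma F$; but then $L=F\cap W_1=\gamma F\cap W_1=W_1$, contradicting that $L$ is proper. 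Hence $H$ is transitive on $W_1\setminus\{0\}$. I expect the only delicate points to be the clean justification of the ``blocks cannot be interchanged'' claim, argued through the support of $w_1+gw_2$, and the invocation of irreducibility of $H$ on $W_1$; everything downstream is a routine dimension comparison.
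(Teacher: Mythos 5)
Your proof is correct, and it shares the paper's overall skeleton --- argue by contradiction from two vectors $w_1,w_2\in W\setminus\{0\}$ in distinct $H$-orbits (hence distinct $G$-orbits), apply the IBIS dichotomy (for nonzero $u,v$ either $G_u=G_v$ or $G_u\cap G_v=1$) to a vector supported on two blocks to conclude that the stabilizer of a vector of $W_1$ acts trivially outside $W_1$, and then use block-transitivity of $G$ for the final contradiction --- but it implements the two key steps with different tools. Where the paper chooses a small orbit $\Omega$ (of size at most $(|W|-1)/2$), proves $G_\alpha=G_{\beta_{i,w}}$ for every $w\notin\Omega\cup\{0\}$, and then kills the block components of elements of $G_\alpha$ by a fixed-point count (an element of $\GL(W)$ fixing at least $(|W|+1)/2$ vectors is trivial, for which the paper cites \cite{gm}), you instead let $gw_2$ range over the coset $\{g\in G : gW_1=W_i\}$ and use the irreducibility of $H$ on $W_1$ (a Clifford-type fact you correctly flag as standard) to obtain $W_i\subseteq\mathrm{Fix}(G_{w_1})$ by a spanning argument; this avoids both the counting bound and the need to pick a smallest orbit. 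In the endgame, the paper conjugates the subgroup $G_\alpha=\{(k,1,\dots,1)\mid k\in K\}$ by an element $z$ moving block $1$ to block $2$ and notes that $G_\alpha^z\le G_\alpha$ while visibly $G_\alpha^z\cap G_\alpha=1$; your transport of the block-graded subspace $F=\mathrm{Fix}(G_{w_1})$ by $\gamma$, combined with the partition property, is the dual, subspace-level form of the same trick. What your route buys is self-containedness (no external citation for the counting step) and a cleanly isolated structural observation --- the subspaces $\mathrm{Fix}(G_v)$, $v\neq 0$, partition $V\setminus\{0\}$ whenever $G$ is IBIS with $b(G)=2$ --- which the paper uses only implicitly here and in its later arguments.
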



\begin{proof}
	We may identify	$G$ with a subgroup of the wreath product $H \wr S_n.$ 
	Assume that $H$ is not transitive on $W \setminus \{0\}.$ Then there exists an orbit $\Omega$ for the action of $H$ on $W \setminus \{0\},$ with $|\Omega| \leq (|W|-1)/2.$ Take $u \in \Omega$ and	let $\alpha=(u,0,\dots,0)\in V=W^n.$ For any
	$2\leq i\leq n$ and $w \notin \Omega \cup\{0\},$ let $$\beta_{i,w}=(0,\dots,0, w,0,\dots,0) {\text{ and }}\gamma_{i,w}=(u,0,\dots,0, w,0,\dots,0)$$ (where $w$ is the entry in the $i$-position). If $y \in G_{\gamma_{i,w}},$ then $y=(h_1,\dots,h_n)\sigma,$ with $uh_1=u, wh_i=w$ and $1\sigma=1,$ $i \sigma=i.$ In particular $G_{\gamma_{i,w}}=G_\alpha \cap G_{\beta_{i,w}}.$
	Since $b(G)=2,$ $G_{v}\neq 1$ for every $v\in V,$ and since $G$ is IBIS, if
	$G_{v_3}=G_{v_2}\cap G_{v_1},$ then $G_{v_1}=G_{v_2}=G_{v_3}.$
	So
	$G_\alpha = G_{\beta_{i,w}},$ for any $w \notin \Omega \cup\{0\},$ and any $i\geq 2.$ In particular, if $g\in G_\alpha,$ then $g=(x,y_2,\dots,y_n)$ with $x\in H_u$ and $y_j\in G_w$ for every $w \notin \Omega.$
	But then the number of vectors in $W$ fixed by $y_j$ is at least $(|W|+1)/2,$
	and therefore $y_j=1$ by \cite[Theorem 1]{gm}.
		This implies that there exists $1\neq K \leq H$ such that
	$G_\alpha=\{(k,1,\dots,1)\mid k \in K\}.$ On the other hand, $G$ contains an elements $z=(h_1,\dots,h_n)\tau$ with $1\tau=2$ and $K_\alpha^z=\{(1,k^{h_1},1,\dots,1)\mid k \in K\}\leq G_\alpha,$ a contradiction.
\end{proof}
\begin{thm}\label{alpiudue}\cite[Theorem 1.1]{HP}  $b(G)\le 2$ for every irreducible nilpotent subgroup of $\GL(V).$
	\end{thm}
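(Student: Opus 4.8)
The statement is that $b(G)\le 2$, so the goal is to exhibit two nonzero vectors $v_1,v_2\in V$ whose pointwise stabiliser $G_{v_1}\cap G_{v_2}$ is trivial. The plan is to count the ``bad'' pairs and show they do not exhaust all of $(V\setminus\{0\})^2$. First I would record two reductions. A pair $(v_1,v_2)$ is bad precisely when some nontrivial $g\in G$ fixes both coordinates, and by passing to a suitable power we may take $g$ of prime order; so it suffices to control the prime-order elements. Moreover, by Schur's lemma irreducibility forces every nontrivial $z\in Z(G)$ to act as a scalar $\lambda\neq 1$, whence $C_V(z)=0$: \emph{central} prime-order elements fix no nonzero vector and contribute nothing. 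Thus only noncentral prime-order elements matter, and each lies in a genuine $p$-subgroup of the nilpotent group $G$.

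Writing $\mathrm{Fix}_V(g)=C_V(g)$ for the (proper) fixed subspace of a noncentral prime-order $g$, the union bound gives
\[
\bigl|\{(v_1,v_2)\in(V\setminus\{0\})^2 : G_{v_1}\cap G_{v_2}\neq 1\}\bigr|
\;\le\;\sum_{g}\bigl(q^{\dim C_V(g)}-1\bigr)^2
\;<\;\sum_{g} q^{2\dim C_V(g)},
\]
the sum running over noncentral elements of prime order, where $V=\mathbb F_q^{\,d}$. To obtain a base of size two it is enough to prove that this quantity is strictly smaller than $(q^d-1)^2$. The decisive input is therefore an upper bound on $\dim C_V(g)$ together with a bound on how many elements can share a large fixed space. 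Here I would invoke the structure of irreducible nilpotent linear groups: $G$ is the direct product of its Sylow subgroups $P_{p_1}\times\cdots\times P_{p_k}$, the module decomposes as a tensor product $V=V_1\otimes\cdots\otimes V_k$ with each $P_{p_i}$ irreducible on $V_i$, and in the primitive case each factor has symplectic type (a central product of an extraspecial group with a cyclic group). From this one reads off that a noncentral $g$ acting with codimension-$c$ fixed space must move a whole tensor factor nontrivially, so $c$ is bounded below in terms of the corresponding $\dim V_i$; the elements realising the smallest such codimension are the reflection-like ones (for instance the monomial elements $(k,1,\dots,1)$ appearing in Proposition~\ref{due}), and these form only a few conjugacy classes whose sizes are controlled by the transitivity furnished by Proposition~\ref{trans}.

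The main obstacle is exactly this last bookkeeping: balancing, class by class, the factor $q^{2\dim C_V(g)}$ against the number of elements in the class, uniformly over all irreducible nilpotent $G$. The delicate regime is small $q$ (especially $p\mid q-1$, where reflections abound and fixed spaces are large) and imprimitive groups of the wreath-product shape $H\wr S_n$, where an element acting as a scalar on one block fixes a subspace of codimension only $d/n$. For these I would either sharpen the count using \cite[Theorem~1]{gm} (a nontrivial linear map fixes at most $q^{\dim-1}$ vectors, so no block-scalar can fix more than half of $W$) or, alternatively, argue by induction along an imprimitivity decomposition, choosing $v_1,v_2$ so that their block-supports already defeat every noncentral stabiliser on each factor. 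Either way, once the reflection-like classes are shown to be too sparse to fill up $(q^d-1)^2$, the counting inequality closes and yields a base of size two; the genuinely hard part is making the reflection estimate tight enough to cover the handful of smallest cases rather than any conceptual difficulty.
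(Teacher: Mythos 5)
The first thing to observe is that the paper does not prove this statement at all: it is quoted from Halasi--Podoski \cite[Theorem 1.1]{HP}, whose actual theorem is that every \emph{coprime} linear group (one with $(|G|,|V|)=1$) admits a base of size two. The nilpotent irreducible case is a special case of that result, because an irreducible subgroup of $\GL(V)$ in characteristic $p$ has $O_p(G)=1$ (a nontrivial normal $p$-subgroup would have a nonzero, $G$-invariant fixed-point space, hence would act trivially on $V$), and for nilpotent $G$ the Sylow $p$-subgroup is normal, so $p\nmid |G|$. What you set out to do, therefore, is not to reconstruct an argument from this paper but to reprove a long and genuinely difficult theorem from another one, and your sketch has to be judged on those terms.

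Judged as a proof, the proposal has a genuine gap, which you in fact flag yourself. The opening reductions are fine (prime-order elements suffice; central elements act fixed-point-freely by Schur's lemma), but the entire content of the theorem lies in the estimates you defer: bounding, class by class and uniformly over all irreducible nilpotent groups, the number of noncentral prime-order elements with a given fixed-space dimension, in particular in the imprimitive wreath-like configurations and for small $q$. Saying you ``would either sharpen the count using \cite[Theorem 1]{gm} or, alternatively, argue by induction'' is precisely the part that is hard; Halasi and Podoski need a lengthy, delicate induction with many special cases, not a one-pass union bound, and nothing in your sketch indicates how the bound closes in the critical cases (e.g.\ $p\mid q-1$ with many reflection-like elements). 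Two further concrete defects: your appeal to Proposition \ref{trans} is circular, since that proposition \emph{assumes} $G$ is IBIS with $b(G)=2$ --- hypotheses unavailable, and irrelevant, when the goal is $b(G)\le 2$ for an arbitrary irreducible nilpotent group; and the structural claim that $G=P_{p_1}\times\cdots\times P_{p_k}$ forces a tensor decomposition $V=V_1\otimes\cdots\otimes V_k$ with each $P_{p_i}$ irreducible on $V_i$ is only valid for absolutely irreducible modules (over a splitting field); over $\GF(q)$ an irreducible module for a direct product need not factor this way without first extending scalars and then descending, a step your outline does not address. So what you have is a plausible strategy whose missing middle is, in effect, the theorem itself.
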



\begin{proof}[Proof of Proposition \ref{due}]
	Let $A=\{(h,0)\mid h \in H\}\cap G, B=\{(0,h)\mid h \in H\}\cap G$ and let $v=(w_1,w_2)\neq (0,0)\in V.$ If $w_1=0,$ then $G_v=A,$ if $w_2=0,$ then $G_v=B$ and if neither $w_1=0$ nor $w_2=0,$ then $G_v=\langle (h,h^{-1})(1,2)\rangle$ where $h$ is the unique element of $H$ with $v_1h=v_2.$ In the last case $|G_v|=2$ and $G_v \cap A = G_v \cap B=1.$ 
	Hence, if $v_1$ and $v_2$ are non-zero elements of $V$, then either $G_{v_1}=G_{v_2}$ or $G_{v_1}\cap G_{v_2}=1.$
\end{proof}

\begin{proof}[Proof of Theorem \ref{ibisnilp}]
If $G$ has a regular orbit on $V$, then (1) holds. So assume that $G$ does not have a regular orbit on $V.$ By Theorem \ref{alpiudue}, $b(G)=2.$ 

If $G$ is primitive, then
by \cite[Theorem 2.4]{regularorbitnil} (see also \cite[Remark 2.6]{regularorbitnil}) $G$ is as in Proposition \ref{uno}.

So we may assume that $V$ has an imprimitive decomposition $V=W_1\times \dots \times W_n$, with $W\cong W_i$ for $1\leq i \leq n$, and  that $G$ permutes primitively the factors $W_1,\dots,W_n.$ Let $H$ be the nilpotent, irreducible subgroup of $\GL(W)$ induced by the action on $W_1$ of the setwise stabilizer of $W_1$ in $G$ and $T$ the nilpotent primitive permutation group induced by the permutation action of $G$ on the set $\{W_1,\dots,W_n\}$. It must be that $n$ is a prime and  $T=\langle\sigma\rangle$, with  $\sigma=(1,2,\dots,n)$, and
$G\leq H \wr \langle \sigma \rangle.$
Let $X:=G\cap H^n$. Notice that $X$ is a subdirect product of the base subgroup $H^n$, i.e., if $\pi_i:X\to H$ is the projection on the $i$-th component, then $X^{\pi_i}=H$ for each $i=1,\dots,n$. For any $w\in W,$ let
$v_w=(w,0,\dots,0) \in V.$ If $w\neq 0,$ then $G_{v_w}\leq X$ and $(G_{v_w})^\pi=H_w.$ In particular if $(w_1,\dots,w_t)$ is an irredundant base
for the action of $H$ on $W\setminus \{0\},$ then $(v_{w_1},\dots,v_{w_t})$ is an irredundant base for the action of $G$ on $V\setminus \{0\}.$ Hence the assumption that $G$ is an IBIS subgroup of $\GL(V)$ implies that $H$ is an IBIS subgroup of $\GL(W).$ By Theorem \ref{alpiudue}, $b(H)=2.$ So we distinguish two cases:

\noindent a) $b(H)=1.$ Fix  $0\neq w\in W$, let $\beta_i=(w,\dots,w,0,w,\dots,w)\in V$, with $0$ in the $i$-th position,
and $\gamma_i=(0,\dots,0,w,0,\dots,0)\in V$ with $w$ in the $i$-th position. As we are assuming that $G$ is IBIS with $b(G)=2$, $G_{\beta_i}\ne 1$.  As $H_{w}=1$, we have $G_{\beta_i}=\{(1,\dots,1,s,1,\dots,1)|s\in S_i\}$ for some nontrivial subgroup $S_i$ of $H$. Moreover, if $g\in G_{\gamma_i},$ then $g\in H^n$ and $g^{\pi_i}=1.$ If $n\ge 3$ then 
$1\ne G_{\beta_3}\le G_{\gamma_1}\cap G_{\gamma_2}<G_{\gamma_1}$, as $G_{\beta_2}\leq G_{\gamma_1}$ and $G_{\beta_2}\not\le G_{\gamma_2}$. This contradicts the fact that $G$ is IBIS and $b(G)=2$. Hence $n=2.$ By Proposition \ref{trans}, the action of $H$ on $W\setminus \{0\}$ is transitive. Moreover, since $H$ is IBIS and $b(H)=1,$ this action is also regular. So we may identify $W$ with the additive group of the field $F$ with $r^k$ elements, being $r$ a prime, and $H$ with the multiplicative group of this field. Let $f$ be a non-zero element of $F$
and $v=(1,f)\in W^2.$ Since $b(G)=2,$ we have
$1\neq (t_1,t_2)\sigma \in G_v$ for some $t_1, t_2 \in H,$ with $\sigma=(1,2).$ This implies $t_1=f,$ $t_2=f^{-1}$ and therefore $(f,f^{-1})\sigma \in G.$ But now let $u=(f,f)\in V.$ 
Again we have
$1\neq (u_1,u_2)\sigma^j \in G_u$ for some $u_1, u_2 \in H$ and $j \in \{0,1\}.$
The only possibility is $u_1=u_2=1$ and $j=1.$ So $\langle (1,2), (h,h^{-1})\mid h\in H\rangle \subseteq G$. Since $G$ is nilpotent,
$\langle (1,2), (h,h^{-1})\mid h\in H\rangle$ is also nilpotent, and this implies that $r^k-1$ is a 2-power.  
 Finally let $z=(0,1)\in V.$ Again $G_z\neq 1,$ and this is possible only if $(h,1)\in G$ for some $1\neq h \in H.$ We have so proved that $G$ is as described in Proposition \ref{due}.

\noindent b) $b(H)=2.$ 
 Let $0\neq w\in W$. There exists a nontrivial element $h\in H$ such that $w^h=w$. Since $X^{\pi_1}=H$, there exist $h_2,\dots,h_n\in H$ such that  $(h,h_2,\dots,h_n)\in X$. 
 Assume that $h_2=\dots=h_n=1$. Then, as $\langle\sigma\rangle$ acts transitively on $H_1,\dots,H_n,$ there exists a nontrivial element
 $t\in H$ such that $(1,t,1,\dots,1)\in X$. Take $u\in W$ such that $u^h \ne u$ and let $\beta=(w,0\dots,0), \gamma=(u,0,\dots,0)$. Then $1\ne G_{\beta}\cap G_{\gamma}< G_{\gamma} $, as $1\ne (1,t,1,\dots,1)\in  G_{\beta}\cap G_{\gamma}$ and $(h,1,\dots,1)\in G_{\beta}\setminus G_{\gamma}$. This is in contradiction with the fact that $G$ is IBIS. Thus there exists $i$ such that $h_i\ne 1$. Let $z\in W$ be such that $z^{h_i}\ne z$. Assume that $n\ne 2$. Let $\delta=(w,z_2,\dots,z_n)\in V$, where $z_i=z$ and $z_j=0$ if $j\ne i$. Then $G_\delta\le X$. As $G$ is an IBIS group and $b(G)\neq 1,$  we have that $G_\delta\ne 1$.  Moreover $1\ne G_\delta=X_\delta  <  X_{\beta}$, as $(h,h_2,\dots,h_n)\in  X_{\beta}\setminus X_\delta$, but this gives again a contradiction. We have so proved that $n=2$. Moreover, repeating the argument above, we deduce that $X$ contains no nontrivial elements of the type $(h,1)$, with $h\in H$. As $X$ is a subdirect product of $H^2$, there exists an automorphism $\alpha$ of $H$ such that $X=\{(h,h^\alpha)|\,h\in H\}$. Moreover
there exists $g=(1,k)\sigma \in G$ and $\alpha \in \aut(G)$ such that $G=\{(h,h^\alpha) \mid h \in H\}\langle g \rangle.$
 
  By Proposition \ref{trans}, $H$ is transitive on $W\setminus \{0\},$ and consequently $H$ is a primitive irreducible subgroup of $\GL(W).$ By \cite[Theorem 2.4]{regularorbitnil}  $H$ is as is Proposition \ref{uno}.
In particular
$W$ can be identified with the additive subgroup of the field $F$ with $r^2$ elements, where $r$ is a Mersenne prime and $H$ can be identified with a subgroup of $\Gamma=
\Gamma(r^2).$ Let $0\neq f \in F.$ We have $\Gamma_f \cong C_2$. On the other hand, since $b(H)=2,$ $H_f\neq 1,$ and therefore $H_f=\Gamma_f.$ By the transitivity of $H$ on $F\setminus \{0\},$ it follows $H=\Gamma.$
Suppose that there exists $h \in H$ such that $hh^\alpha k \neq 1$. There exists $0\neq v \in W$  with $hh^{\alpha} k\in H_v.$ Let $w=v^h$ and $y=(h,h^\alpha)(1,k)\sigma.$ Then $y \in G_{(v,w)}$ and $|y|>2.$
In particular $y^2=(hh^\alpha k, h^\alpha kh)\in  G_{(v,w)}\cap G_{(v,0)}.$
Since $G$ is IBIS, we must have $\{(t,t^\alpha) \mid t\in H_v\}=G_{(v,0)}=G_{(v,w)},$ in contradiction with the fact that $|G_{(v,0)}|=|H_v|=2$ and $|G_{(v,w}|\geq |y|>2.$
 But then
$h^\alpha=(kh)^{-1}$ for any $h \in H.$ This implies $k=1$
and $h^\alpha=h^{-1}$ for every $h\in H$. However this would implies that $\Gamma=H$ is abelian. This final contradiction implies that the case $b(H)=2$ cannot occur. \end{proof}

\section{IBIS primitive permutations groups of odd order}\label{odd}

\begin{proof} [ Proof of Theorem  \ref{ibisodd}]
By \cite[Theorem 1.3]{as}, $b(G)\leq 2.$ If $b(G)=1,$
	then $G$ acts fixed-point-freely on $V$. 
	So we may assume $b(G)=2.$ This implies that $G_v \neq 1$ for every $0\neq v\in V,$
and if $0\neq v_1, v_2,$ then either $G_{v_1}=G_{v_2}$ or $G_{v_1}\cap G_{v_2}=1.$	
Assume, by contradiction, that $G$ is not primitive. Then $G\leq H \wr S_n,$ with $H\leq \GL(W)$ and $V\cong W^n$ and $n\geq 2.$  Fix $0\neq u \in W$ and let $\alpha=(u,0,\dots,0).$ For any
	$2\leq i\leq n$ and $0\neq w \in W,$ let $\beta_{i,w}=(0,\dots,0, w,0,\dots,0)$ and $\gamma_{i,w}=(u,0,\dots,0, w,0,\dots,0)$ (where $w$ is the entry in the $i$-position). If $g \in G_{\gamma_{i,w}},$ then $g=(h_1,\dots,h_n)\sigma,$ with $\{1,i\}\sigma=\{1,i\}.$
	Since $|G|$ is odd, also $|\sigma|$ is odd. Thus $1\sigma=1,$ $i \sigma=i$ and consequently  $u^{h_1}=u, w^{h_i}=w$ and
	$1\neq G_{\gamma_{i,w}}=G_\alpha \cap G_{\beta_{i,w}}.$ Hence
	$G_\alpha = G_{\beta_{i,w}},$ for any $0\neq w$ and any $i\geq 2.$ But then there exists $1\neq X \leq H$ such that
	$G_\alpha=\{(x,1,\dots,1)\mid x \in X\}.$ On the other hand $G$ contains an elements $z=(y_1,\dots,y_n)\tau$ with $1\tau=2$ and $(G_\alpha)^z=\{(1,x^{y_1},1,\dots,1)\mid x \in X\}\leq G_\alpha,$ a contradiction. Finally, if $G$ is primitive, then it follows from Theorem \ref{qpri} that $G$ is metacyclic.
\end{proof}

To give an example of an IBIS linear group of odd order, choose an odd prime $r$ and consider the subgroup $G$ of $\Gamma(q^r)$ generated by $\phi$ and an element of $N$ of order $(q^r-1)/(q-1)$. Then $|G|=r(q^r-1)/(q-1)$ is odd and $G$ is IBIS by Corollary \ref{corodd}. Another example can be obtained  taking the subgroup $G:=\langle \phi a^2, a^{2c} \rangle$ of $\Gamma(7^9)$ with $|a|=7^9-1$ and $c=1+7+49=57.$ We have that $|G|=3185811$ and $G$ is IBIS by 
Theorem \ref{meta}.

\begin{proof} [Proof of Corollary  \ref{ibisoddpr}]
 Let $V=\soc(G).$ Then $|V|=n$ and a point-stabilizer $G_\omega$ is 
an IBIS irreducible subgroup of $\GL(V)$ of odd order. 
 If $G_\omega$ acts fixed-point-freely on $V$, then $G$ is a Frobenius group. Otherwise, by \cite[Theorem 2.12]{as}, $G_\omega \leq \GaL(1,n)$
and consequently $G\leq \agaL(1,n).$
\end{proof}

	\thebibliography{99}
	
\bibitem{b1} T. C. Burness, R. M. Guralnick, J. Sax, On base sizes for symmetric groups, Bull. London Math. Soc. 43 (2011), 386--391.
\bibitem{b2} T. C. Burness, On base sizes for actions of finite classical groups, J. London Math. Soc. 75 (2007), 545--562.
\bibitem{b3} T. C. Burness, M. W. Liebeck, A. Shalev, Base sizes for simple groups and a conjecture of Cameron, Proc. London Math. Soc. 98 (2009), 116--162.
\bibitem{b4} T. C. Burness, E. A. O'Brien, R. A. Wilson, Base sizes for sporadic simple groups, Israel J. Math. 177 (2010), 307--334.	
	
\bibitem{cam} P. J. Cameron, Permutation Groups, London Mathematical Society Student Texts. Cambridge University Press, {1999}.	
	
\bibitem{ibis} P. J. Cameron and D. G. Fon-Der-Flaas, {Bases for Permutation Groups and Matroids}, {Europ. J. Combin.} {16} (1995), 537--544.



\bibitem{HP}  Z. Halasi, K. Podoski,  Every coprime linear group admits a base of size two,
  Trans. Amer. Math. Soc. 368 (2016), no. 8, 5857--5887. 

\bibitem{HY} D. Holt and  Y. Yang, Regular orbits of finite primitive solvable groups, the final classification,  J. Symbolic Comput. 116 (2023), 139--145. 

\bibitem{gm} R. Guralnick, K. Magaard,  On the minimal degree of a primitive permutation group, J. Algebra 207 (1998), no. 1, 127--145.

\bibitem{regularorbitnil} B. B. Hargraves, The Existence of Regular Orbits for Nilpotent Groups, J. Algebra  {72} (1981), 54--100. 

\bibitem{lp} M. Lee and P. Spiga, A classification of finite primitive IBIS groups with alternating socle, arXiv:2206.01456

\bibitem{lmm} A. Lucchini, M. Morigi and M. Moscatiello,  Primitive permutation IBIS groups, J. Combin. Theory Ser. A 184 (2021), Paper No. 105516, 17 pp.

\bibitem{yvv} Y. Yang, A Vasil'ev and  E. Vdovin, Regular orbits of finite primitive solvable groups, III. J. Algebra 590 (2022), 139--154.

\bibitem{as} A. Seress, The minimal base size of primitive solvable permutation groups, J. London Math. Soc. (2) 53 (1996), no. 2, 243--255. 



	\end{document}